\def\Rb{\mathbb{R}}		
\def\Cb{\mathbb{C}}          
\newcommand{\Zb}{\mathbb{Z}}
\newcommand{\Nb}{\mathbb{N}}
\newcommand{\restr}[1]
   {\vrule height1ex width.4pt
depth1.4ex\lower1.4ex\hbox{\scriptsize $\,#1$}}
\newtheorem{theorem}{Theorem}
\newtheorem{corollary}[theorem]{Corollary}
\newtheorem{proposition}{Proposition}
\newtheorem{lemma}{Lemma}
\theoremstyle{definition}
\newtheorem{remark}{Remark}
\author{Inês Cruz\thanks{Centro de Matem\'atica da Universidade do Porto (CMUP), Departamento de Matem\'atica,  Faculdade de Ci\^encias da Universidade do Porto, R. Campo Alegre, 687, 4169-007 Porto, Portugal.E-mails: imcruz@fc.up.pt, mmmatos@fc.up.pt},\,\, Helena Mena-Matos\footnotemark[1]\,\, and M. Esmeralda Sousa-Dias\thanks{Center for Mathematical Analysis, Geometry and Dynamical Systems (CAMGSD),
Departamento de Matem\'atica, Instituto Superior T\'ecnico, Av. Rovisco Pais, 1049-001 Lisboa, Portugal. E-mail:e.sousa.dias@tecnico.ulisboa.pt}}
\begin{document}
\title{Dynamics and periodicity in a family of cluster maps}

\maketitle

\begin{abstract}
The dynamics of a 1-parameter family of cluster maps $\varphi_r$  associated to mutation-periodic quivers in dimension 4, is studied in detail. The use of presymplectic reduction leads to a globally periodic symplectic map, and this enables us to reduce the problem to the study of maps belonging to a  group of  symplectic birational maps of the plane which is  isomorphic to $SL(2,\mathbb{Z})\ltimes\mathbb{R}^2$. We conclude that there  are three different types of dynamical behaviour for $\varphi_r$ characterized by the integer parameter values $r=1$, $r=2$ and $r>2$.  For each type, the periodic points,  the structure and the asymptotic behaviour of the orbits are completely described. A finer description of the dynamics is provided  by using first integrals.

\end{abstract}
 
\medskip

\noindent {\it MSC 2010:} 39A20, 37E15, 37J10, 37J15, 13F60.\\

\noindent {\it Keywords:}   mutation-periodicity, cluster maps, global periodicity,  symplectic maps, dynamics.

\section{Introduction}

We study the dynamics in $\Rb^4_+$ of a family of maps which arises in the context of  the theory of cluster algebras  associated to 4-node quivers with mutation-period equal to 2. This family depends on a positive integer parameter $r$ and is defined by
\begin{equation}\label{Family1}
\varphi_r(x_1,x_2,x_3,x_4)=  \left(x_3,x_4,\frac{x_2^r+x_3^r}{x_1}, \frac{x_1^rx_4^r+ (x_2^r+x_3^r)^r}{x_1^rx_2}\right).
\end{equation}

The study performed in this work provides the full description of the dynamics of the family of maps \eqref{Family1} and enables us to conclude that there are three  different types of dynamical behaviour characterised by $r=1$, $r=2$ and  $r>2$, respectively. The dynamics of $\varphi_r$ in the cases $r=1$ and $r>2$ is described  in Theorem~\ref{tr=1} and Theorem~\ref{tr>2} respectively, and for $r=2$ it can be found in \cite[Theorem 3]{InHeEs}. In particular,  in what concerns the existence of periodic points of  the map $\varphi_r$:  (a)   $\varphi_1$ is globally 12-periodic, with a unique fixed point and 2-dimensional algebraic subvarieties of points with minimal period 4 and 6; (b) $\varphi_2$ has no periodic points; (c)  if $r>2$, $\varphi_r$ has a unique fixed point and a 2-dimensional algebraic subvariety of periodic points of minimal period 4. Moreover, using first integrals, we also give a  very detailed description of the $\varphi_r$-orbits.

The maps $\varphi_r$, whose iterates define the following system of difference equations:
$$\left\{\begin{matrix}
x_{2n+3}x_{2n-1}&=x_{2n}^r+x_{2n+1}^r&\\
&&\qquad n=1,2,\ldots\\
x_{2n+4}\, x_{2n}&=x_{2n+3}^r+x_{2n+2}^r&\\
\end{matrix}\right. ,$$ 
are associated to the 4-node quivers $Q_r$ in Figure~\ref{quiver}, which have the property of being mutation-periodic quivers with period equal to 2. The notion of \emph{mutation-periodic quiver} was introduced by Fordy and Marsh in \cite{FoMa} in the context of Fomin and Zelevinsky's theory of cluster algebras \cite{FoZe}  and to such a quiver one associates a map whose iterates define a discrete dynamical system. A  map associated to a mutation-periodic quiver is called a  \emph{cluster map}.

\medskip

An important feature of any mutation-periodic quiver $Q$ with $N$ nodes represented by a  skew-symmetric matrix $B$ is the possibility of reducing the associated cluster map $\varphi: \Rb^N_+\rightarrow \Rb^N_+$ to a symplectic map $\widehat{\varphi}:  \Rb^{2k}_+\rightarrow \Rb^{2k}_+$, where $2k$ is the rank of  $B$. In fact, as proved by the first and last authors in \cite{InEs2}, there exists a  semiconjugacy $\Pi: \Rb^N_+\rightarrow \Rb^{2k}_+$ such that the following diagram is commutative
$$\xymatrix{ \Rb^N_+  \ar @{->} [r]^{\Pi}\ar @{->} [d]_{\varphi} & \Rb^{2k}_+ \ar @{->}[d]^{\widehat{\varphi}}\\
\Rb^{N}_+  \ar @{->} [r]_{\Pi} & \Rb^{2k}_+}$$
Also, Darboux type coordinates can be chosen so that the reduced map $\widehat{\varphi}$ preserves the symplectic form 
$$\omega=\sum_{1\leq i\leq k} \frac{dy_{2i-1}\wedge dy_{2i}}{y_{2i-1} y_{2i}}.$$

As the rank of the matrices $B_r$ associated to the family of maps $\varphi_r$ is equal to 2 (for any $r$), the reduced maps $\widehat{\varphi}_r$ are  maps in $\Rb_+^2$. As will be proved in Proposition~\ref{prop1} the symplectic reduced  maps  $\widehat{\varphi}_r$ are all conjugate to a (parameter independent) globally 4-periodic map $\psi$. This property of global periodicity turns out to be the key feature to the  successful description of the dynamics of the maps $\varphi_r$,  since it allows us to study these maps by restricting them and their fourth iterates to 2-dimensional varieties of $\Rb_+^4$. Moreover, these  restricted maps belong to a group of symplectic birational maps whose dynamical behaviour we are able to describe completely. The dynamics of the original family $\varphi_r$ is  then obtained from the dynamics of these restricted maps.

\medskip 

Since 2000, many  connections and applications of the theory of cluster algebras to diverse areas of Mathematics and Physics have been unveiled. Namely, applications to  integrable systems, Poisson geometry, algebraic geometry, quiver representations, Teichm\"uller theory and tropical geometry (see for instance \cite{WL}, \cite{GeLeSc}, \cite{GeShVa3}, \cite{Nak}, and \cite{Lec}). In particular,
 examples of mutation-periodic quivers   appear  in supersymmetric quiver gauge theories associated to complex cones over several surfaces such as the Hirzebruch 0 and the del Pezzo 0-3 surfaces (see \cite[ \textsection 11]{FoMa}). For instance, the quiver $Q_2$ (in Figure~\ref{quiver}) is the one associated to the Hirzebruch 0 surface \cite{FengHa}. Moreover, the rule defining the mutation at a node of a quiver, as defined in 2000 by Fomin and Zelevinsky \cite{FoZe}, coincides with the rule in supersymmetric  quiver gauge theories for Seiberg-dualising a quiver at a given node (see \cite[\textsection 3]{MuRa}).

Let us refer that in the context of cluster algebras, besides the notion of mutation-periodicity there  are other  periodicity notions  (see  \cite{Na} and \cite[\textsection 7] {Marsh}).  For instance the  so-called categorical periodicity which has been used to reformulate and prove the Zamolodchikov’s periodicity conjecture for Y-systems \cite{Za}.  Although this conjecture arose from studies  of the thermodynamic
Bethe ansatz in mathematical physics, it was  proved in the cluster algebras context  by  Fomin-Zelevinsky \cite{FoZeY} for Dynkin diagrams and by Keller \cite{Ke} for pairs of Dynkin diagrams.

There are not many works in the literature addressing the dynamics of cluster maps. In \cite{FoHo2} and \cite{FoHo1} we can find the study of   integrability and algebraic entropy of  cluster maps arising from 1-periodic quivers. Based on results in \cite{InEs2}, which show how to constructively reduce cluster maps associated to mutation-periodic quivers of arbitrary period, we were able to study the dynamics of cluster maps arising from the quivers associated to the Hirzebruch 0 and del Pezzo 3 surfaces (see \cite{InHeEs}). That work and the study presented here are, to the best of our knowledge, the only ones approaching the dynamics of cluster maps associated to higher periodic quivers.

\medskip 

The structure of the paper is as follows.  In Section~\ref{reduction} we compute the reduced symplectic maps $\widehat\varphi_r$ and show that they are all conjugate to a globally 4-periodic map $\psi$. As a consequence, the dynamics of the maps $\varphi_r$ reduces to the  study of  the restriction of $\varphi_r$ and of its fourth iterate, $\varphi_r^{(4)}$, to certain $2$-dimensional varieties (Proposition~\ref{orbits}). All these restricted maps are then shown to belong to a  specific group $\Gamma$ of birational maps. 

Section~\ref{gamma} is devoted to the study of the maps belonging to the group $\Gamma$ which, we show, is isomorphic to the semidirect product  $SL(2,\Zb)  \ltimes \Rb^2$.  Firstly we find normal forms  for  maps of $\Gamma$ up to conjugacy in $G\simeq GL(2,\Zb)  \ltimes \Rb^2$ (Theorem~\ref{FormasNormais}) and then describe the dynamics of  those normal forms  which are relevant to our study. 

In Section~\ref{dynamics},  the dynamics of the family of maps $\varphi_r$ is described.  This is done by obtaining first the dynamics of the restricted maps from the results of the previous section.

\section{Reduction and  restriction}\label{reduction}

Each map $\varphi_r$ of the family under study, that is  
$$\varphi_r(x_1,x_2,x_3,x_4)=  \left(x_3,x_4,\frac{x_2^r+x_3^r}{x_1}, \frac{x_1^rx_4^r+ (x_2^r+x_3^r)^r}{x_1^rx_2}\right),$$
is a cluster map associated to a mutation-periodic quiver of period 2  and 4 nodes, in the sense introduced by Fordy and  Marsh in \cite{FoMa}. This quiver, denoted by $Q_r$, and its associated skew-symmetric matrix $B_r=[b_{ij}]$ are both displayed in Figure~\ref{quiver}: the labels on the arrows of the oriented graph $Q_r$ denote the number of arrows between the corresponding nodes, and each entry $b_{ij}$ of $B_r$  is the number of arrows from node $i$ to node $j$ minus the number of arrows from $j$ to $i$ where the nodes are numbered as $(A,B,C,D)=(1,2,4,3)$.
For the notion of mutation-periodicity and the detailed construction of the respective cluster map we refer to   \cite{FoMa}, \cite{FoHo2}, \cite{InEs2} and \cite{Marsh}. 
        
\begin{figure}[h]
\begin{center}
{\includegraphics[scale=1]{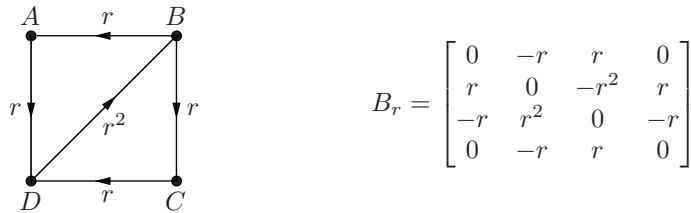}}
\caption{The quiver $Q_r$ and the associated skew-symmetric matrix $B_r$.} 
\label{quiver}
\end{center}
\end{figure}

In this section we first  show that each map $\varphi_r$ is semiconjugate to a  (parameter independent) map $\psi$ which is globally 4-periodic. Then we explain how this reduction  allows us to study the dynamics of $\varphi_r$ by studying the restrictions of  $\varphi_r$ and of its fourth iterate, $\varphi_r^{(4)}$, to certain 2-dimensional subvarieties  of $\Rb_+^4$ (cf. Proposition~\ref{orbits}). Explicit expressions of these restrictions are given in Proposition~\ref{Restrictions}, and it is shown that  all of them belong to a  group $\Gamma$  of symplectic birational maps of the plane.

\subsection{Reduction to a globally periodic map}

The reduction of $\varphi_r$ to a globally periodic map is achieved in two steps. First we use the reduction procedure  in \cite{InEs2} to semiconjugate $\varphi_r: \Rb_+^4\rightarrow \Rb_+^4$  to a symplectic map $\widehat{\varphi}_r:\Rb^2_+\rightarrow \Rb^2_+$. Second, we show that each symplectic reduced map $\widehat{\varphi}_r$ is (topologically) conjugate to a parameter-independent map $\psi$ which is  globally 4-periodic. These steps can be schematically summarized as follows:
$$\xymatrix{ \Rb_+ ^4  \ar @{->} [r]^{\Pi_r}\ar @{->} [d]_{\varphi_r} & \Rb_+ ^2 \ar @{->}[d]^{\widehat{\varphi}_r}\ar @{->}[r]^{h_r} &\Rb_+ ^2\ar @{->}[d]^{\psi}\\
 \Rb_+ ^{4}  \ar @{->} [r]_{\Pi_r} &  \Rb_+ ^{2} \ar @{->}[r]_{h_r}&\Rb_+ ^2
 }$$
For the first step we use the fact that the mutation-periodicity of the quiver $Q_r$ represented by the skew-symmetric matrix $B_r=[b_{ij}]$ is equivalent (see \cite[Theorem~3.1]{InEs2}) to the statement that  $\varphi_r$ preserves the \emph{standard presymplectic form}
\begin{equation}
\label{omegar}
\omega_r = \sum_{1\leq i<j\leq 4} \frac{b_{ij}} {x_ix_j} dx_i\wedge dx_j.  
\end{equation}
As the rank of $B_r$ is equal to 2, the reduced symplectic map $\widehat{\varphi}_r$ is obtained by choosing coordinates such that  the presymplectic form $\omega_r$ reduces to the {\it canonical symplectic} form
$$
\omega=\frac{1}{xy} dx\wedge dy.
$$
Following this procedure for the maps $\varphi_r$ in \eqref{Family1},  we obtain the following reduced maps in $\Rb_+^2$  
\begin{equation}\label{Reduced1}
\widehat{\varphi}_r(x,y)=\left(\frac{y \left(x^r+(1+y^r)^r\right)}{x^r}, \frac{1+y^r}{x}\right), 
\end{equation}
and the semiconjugacies $\Pi_r: \Rb_+^4\rightarrow \Rb_+^2$ 
\begin{equation}\label{semi1}
\Pi_r(x_1, x_2, x_3, x_4) = \left(\frac{x_1x_4}{x_2^r}, \frac{x_3}{x_2}\right).
\end{equation}
More details on these computations can be found in \cite[\textsection 4]{InEs2}.

\begin{proposition}\label{prop1}
Each map $\varphi_r: \Rb_+^4\rightarrow \Rb_+^4$ in (\ref{Family1}) is semiconjugate to the map $\psi: \Rb_+^2\rightarrow \Rb_+^2$ given by
\begin{equation}\label{Reduced2}
\psi(x,y)=\left(y,\frac{1}{x}\right)
\end{equation}
by the semiconjugacy  $\pi_r: \Rb_+^4\rightarrow \Rb_+^2$ defined as
 \begin{equation}\label{SC2}
\pi_r(x_1,x_2,x_3, x_4) =\left(\frac{x_3}{x_2},\frac{x_2^r+x_3^r}{x_1x_4}\right).
\end{equation}
That is,  $\pi_r\circ\varphi_r=\psi\circ\pi_r$.
\end{proposition}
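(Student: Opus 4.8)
The plan is to prove the identity $\pi_r\circ\varphi_r=\psi\circ\pi_r$ by direct substitution, after first recording that $\pi_r$ is onto so that the word ``semiconjugacy'' is justified. Surjectivity of $\pi_r:\Rb_+^4\to\Rb_+^2$ is immediate: given $(a,b)\in\Rb_+^2$, the point $(x_1,x_2,x_3,x_4)=\big((1+a^r)/b,\,1,\,a,\,1\big)$ is sent to $(a,b)$ by \eqref{SC2}. So the real content is the commutation relation.

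For that relation, write $(y_1,y_2,y_3,y_4)=\varphi_r(x_1,x_2,x_3,x_4)$, so $y_1=x_3$, $y_2=x_4$, $y_3=(x_2^r+x_3^r)/x_1$ and $y_4=(x_1^rx_4^r+(x_2^r+x_3^r)^r)/(x_1^rx_2)$. The first component of $\pi_r(y_1,y_2,y_3,y_4)$ is $y_3/y_2=(x_2^r+x_3^r)/(x_1x_4)$, which is exactly the second component of $\pi_r(x_1,x_2,x_3,x_4)$, hence the first component of $\psi\circ\pi_r$. For the second component one computes $y_2^r+y_3^r=\big(x_1^rx_4^r+(x_2^r+x_3^r)^r\big)/x_1^r$ and $y_1y_4=x_3\big(x_1^rx_4^r+(x_2^r+x_3^r)^r\big)/(x_1^rx_2)$; the common factor $x_1^rx_4^r+(x_2^r+x_3^r)^r$ cancels in the ratio $(y_2^r+y_3^r)/(y_1y_4)$, leaving $x_2/x_3$, which is the reciprocal of the first component of $\pi_r(x_1,x_2,x_3,x_4)$, i.e. the second component of $\psi\circ\pi_r$. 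This yields $\pi_r\circ\varphi_r=\psi\circ\pi_r$.

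Alternatively — and this is the route suggested by the diagram preceding the statement — one can factor through the symplectic reduction already in hand. Setting $h_r(u,v)=\big(v,(1+v^r)/u\big)$, one first checks $\pi_r=h_r\circ\Pi_r$ using $\Pi_r(x_1,x_2,x_3,x_4)=(x_1x_4/x_2^r,\,x_3/x_2)$ from \eqref{semi1}, and then checks $h_r\circ\widehat\varphi_r=\psi\circ h_r$ on $\Rb_+^2$, the latter being the short planar computation with $\widehat\varphi_r$ as in \eqref{Reduced1}. Combining these with the identity $\Pi_r\circ\varphi_r=\widehat\varphi_r\circ\Pi_r$ from \cite{InEs2} gives
$$\pi_r\circ\varphi_r=h_r\circ\Pi_r\circ\varphi_r=h_r\circ\widehat\varphi_r\circ\Pi_r=\psi\circ h_r\circ\Pi_r=\psi\circ\pi_r.$$

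There is no serious obstacle: the statement is essentially a verification. The one place requiring care is the second-component computation above, where the bulky expression $x_1^rx_4^r+(x_2^r+x_3^r)^r$ occurs in both numerator and denominator and must be seen to cancel — it is exactly this cancellation that collapses $\widehat\varphi_r$ (which genuinely depends on $r$) to the parameter-free map $\psi$. If one prefers the factorization route, the only mildly delicate point is identifying the intermediate conjugacy $h_r$, but it is forced by the requirement $h_r\circ\Pi_r=\pi_r$ and can be read off directly from \eqref{semi1} and \eqref{SC2}.
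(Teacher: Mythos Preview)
Your proof is correct. Your ``alternative'' route via the factorization $\pi_r=h_r\circ\Pi_r$ with $h_r(u,v)=(v,(1+v^r)/u)$ is precisely the paper's proof: the authors invoke the known semiconjugacy $\Pi_r\circ\varphi_r=\widehat\varphi_r\circ\Pi_r$, introduce $h_r$, verify $h_r\circ\widehat\varphi_r=\psi\circ h_r$, and compose. Your primary approach---the direct four-line substitution showing $\pi_r\circ\varphi_r=\psi\circ\pi_r$ componentwise---is a genuinely different and more self-contained route: it bypasses the intermediate map $\widehat\varphi_r$ and the reduction machinery from \cite{InEs2} entirely, at the cost of one slightly larger cancellation (which you correctly identify). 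The paper's route has the advantage of explaining \emph{where} $\pi_r$ comes from (as $h_r\circ\Pi_r$), while yours has the advantage of being verifiable without any prior theory. Your explicit check of surjectivity is also a small addition the paper leaves implicit.
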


\begin{proof} Each map $\varphi_r$ is semiconjugate to the symplectic reduced map $\widehat{\varphi}_r$ in \eqref{Reduced1} by the semiconjugacy $\Pi_r$ given by \eqref{semi1}. That is, the map $\Pi_r$ is surjective and satisfies $\Pi_r\circ\varphi_r=\widehat{\varphi}_r\circ\Pi_r$. Considering  the conjugacy (i.e. the homeomorphism) $h_r: \Rb_+^2\rightarrow \Rb_+^2$ defined by
 $$h_r (x,y)=\left(y, \frac{1+y^r}{x }\right),$$
it is easy to check  that  $h_r\circ\widehat{\varphi}_r=\psi\circ h_r$, with $\psi$ given by \eqref{Reduced2}. Taking $\pi_r=h_r\circ\Pi_r$ we  conclude that $\pi_r\circ\varphi_r=\psi\circ\pi_r$ as claimed.
\end{proof}

We note  that the map $\psi$ in \eqref{Reduced2} is  globally 4-periodic, that is $\psi^{(4)}=Id$ where $\psi^{(m)} = \psi\circ \cdots\circ \psi$ ($m$ compositions). 
\medskip

\subsection{Restriction to invariant sets}

We now describe the consequences of Proposition~\ref{prop1} to  the orbits of the map $\varphi_r$. 

Recall that, given a map $f: U\subset \Rb^m \rightarrow U$, a {\it periodic point $\mathbf{x}$ of $f$} is a point satisfying $f^{(m)}(\mathbf{x})=\mathbf{x}$ and $f^{(k)}(\mathbf{x})\neq\mathbf{x}$ for all $k<m$.  The number $m$ is called the {\it minimal period of ${\bf x}$}. The \emph{(forward) $f$-orbit} of $\mathbf{x}\in U$ is the set
$${\cal O}_f(\mathbf{x}) = \{ f^{(n)}(\mathbf{x}): n\in\Nb_0 \},$$
where $f^{(0)}$ denotes the identity map. 

The globally 4-periodic map $\psi$ in \eqref{Reduced2} has only one point with minimal period less than 4 which is the fixed point $(1,1)$. Moreover, the semiconjugacy between $\varphi_r$ and $\psi$ allows us to confine the orbits of $\varphi_r$ to certain  2-dimensional subsets of $ \Rb^4_+$.  This is a consequence of Theorem~2 in \cite{InHeEs} which is restated in the next proposition for the particular case of the maps $\varphi_r$.
 \medskip

\begin{proposition}
\label{orbits} 
Let  $\varphi_r:\Rb_+^4\rightarrow\Rb_+^4$ and $\psi :\Rb_+^2\rightarrow\Rb_+^2$ be the maps in Proposition~\ref{prop1}. For $(p,q)\in \Rb_+^2$ let 
\begin{equation}\label{Crpq}
C^r_{(p,q)}=\left\{(x_1,x_2,x_3,x_4)\in\Rb_+^4:\, x_3=p x_2,\,\, q x_1x_4= (1+ p^r) x_2^r\right\}.
\end{equation}
Then 
\begin{enumerate}
\item $C^r_{(p,q)}$ is invariant under $\varphi_r^{(4)}$; 
\item $C^r_{(1,1)}$ is invariant under $\varphi_r$.
\end{enumerate}
Moreover, if $(p,q)\neq (1,1)$ the sets $C^r_{\psi^{(i)}(p,q)}$, with $i=0,1, 2,3$, are pairwise disjoint and  the $\varphi_r$-orbit of  $\mathbf{x}\in C^r_{(p,q)}$ circulates cyclically  between  them  as follows
 $$C^r_{(p,q)}\longrightarrow C^r_{(q,1/p)}\longrightarrow C^r_{(1/p, 1/q)} \longrightarrow C^r_{(1/q, p)}\longrightarrow C^r_{(p,q)}
 $$
\end{proposition}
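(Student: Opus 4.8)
The plan is to recognize each set $C^r_{(p,q)}$ as a fiber of the semiconjugacy $\pi_r$ of Proposition~\ref{prop1}, after which both invariance claims and the cyclic circulation of orbits follow formally from the identity $\pi_r\circ\varphi_r=\psi\circ\pi_r$ together with $\psi^{(4)}=\mathrm{Id}$.

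First I would check that $C^r_{(p,q)}=\pi_r^{-1}\bigl(\{(p,q)\}\bigr)$, i.e. that a point $(x_1,x_2,x_3,x_4)\in\Rb_+^4$ lies in $C^r_{(p,q)}$ exactly when $\pi_r(x_1,x_2,x_3,x_4)=(p,q)$. This is immediate from \eqref{SC2} and \eqref{Crpq}: the condition $x_3/x_2=p$ is $x_3=px_2$, and substituting $x_3=px_2$ into $(x_2^r+x_3^r)/(x_1x_4)=q$ gives $(1+p^r)x_2^r=qx_1x_4$. Since $\Pi_r$ is surjective and $h_r$ is a homeomorphism, $\pi_r=h_r\circ\Pi_r$ is surjective, so every fiber $C^r_{(p,q)}$ is nonempty (alternatively one exhibits a point directly, e.g. $x_1=x_2=1$, $x_3=p$, $x_4=(1+p^r)/q$), which is what makes the statement about orbits non-vacuous.

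Next, from $\pi_r\circ\varphi_r=\psi\circ\pi_r$ I get, for every $a\in\Rb_+^2$,
$$\varphi_r\bigl(C^r_a\bigr)\subseteq C^r_{\psi(a)},$$
because $\mathbf{x}\in C^r_a$ forces $\pi_r(\varphi_r(\mathbf{x}))=\psi(\pi_r(\mathbf{x}))=\psi(a)$, i.e. $\varphi_r(\mathbf{x})\in\pi_r^{-1}(\{\psi(a)\})=C^r_{\psi(a)}$. Iterating this inclusion four times and using $\psi^{(4)}=\mathrm{Id}$ (noted right after Proposition~\ref{prop1}) yields $\varphi_r^{(4)}(C^r_{(p,q)})\subseteq C^r_{(p,q)}$, which is item 1; taking $a=(1,1)$, the fixed point of $\psi$, gives $\varphi_r(C^r_{(1,1)})\subseteq C^r_{(1,1)}$, which is item 2. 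If one prefers genuine invariance with equality, note that $\varphi_r$ is a bijection of $\Rb_+^4$ — its inverse is obtained by solving $\varphi_r(\mathbf{x})=\mathbf{y}$ explicitly, successively recovering $x_3,x_4$, then $x_2$, then $x_1$ — so all the inclusions above are in fact equalities.

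Finally, for the ``moreover'' part: when $(p,q)\neq(1,1)$, since the only point of minimal $\psi$-period less than $4$ is $(1,1)$, the point $(p,q)$ has minimal $\psi$-period exactly $4$; hence $(p,q),\psi(p,q),\psi^{(2)}(p,q),\psi^{(3)}(p,q)$ are four distinct points of $\Rb_+^2$, and the fibers of the (well-defined) map $\pi_r$ over distinct points are pairwise disjoint, giving the disjointness of the sets $C^r_{\psi^{(i)}(p,q)}$. The explicit circulation then follows by computing from \eqref{Reduced2} that $\psi(p,q)=(q,1/p)$, $\psi^{(2)}(p,q)=(1/p,1/q)$, $\psi^{(3)}(p,q)=(1/q,p)$, and combining with $\varphi_r(C^r_a)\subseteq C^r_{\psi(a)}$. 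I do not expect a real obstacle here: the argument is entirely formal once $C^r_{(p,q)}$ is identified with $\pi_r^{-1}(\{(p,q)\})$, and the only steps requiring an actual (short) computation are that identification and the explicit $\psi$-orbit of $(p,q)$.
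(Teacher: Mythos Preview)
Your proof is correct and follows essentially the same approach as the paper: identify $C^r_{(p,q)}$ with the fiber $\pi_r^{-1}(\{(p,q)\})$ and then derive everything from the semiconjugacy $\pi_r\circ\varphi_r=\psi\circ\pi_r$ together with the $4$-periodicity of $\psi$ and the fact that $(1,1)$ is its only point of period less than $4$. You supply a bit more detail than the paper (nonemptiness of fibers, the remark on equality via bijectivity of $\varphi_r$, and the explicit reason the four fibers are disjoint), but the argument is the same.
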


\begin{proof} 
Note that 
$$
C^r_{(p,q)} = \{ \mathbf{x} \in \Rb_+^4: \pi_r(\mathbf{x}) = (p,q)\}
$$
where $\pi_r$ is the map in \eqref{SC2}.  As $\pi_r\circ\varphi_r=\psi\circ\pi_r$ (cf. Proposition~\ref{prop1}) and $\psi$ is globally 4-periodic we have
$$\pi_r \circ \varphi_r^{(4)} (\mathbf{x})  = \psi^{(4)} \circ \pi_r (\mathbf{x}) = (p,q),$$
which means that $C^r_{(p,q)}$ is invariant under $\varphi_r^{(4)}$. 

The invariance of $C^r_{(1,1)}$ under $\varphi_r$ follows from the identity $\pi_r \circ \varphi_r(\mathbf{x})  = \psi \circ \pi_r (\mathbf{x})$ and from the fact that $(1,1)$ is a fixed point of $\psi$.

Finally the sets $C^r_{\psi^{(i)}(p,q)}$, with $i\in\{ 0,1, 2,3\}$, are clearly disjoint and for $\mathbf{x}\in C^r_{(p,q)}$ one has
$$\pi_r\circ \varphi_r (\mathbf{x}) = \psi (p,q),$$
which means that $\varphi_r (\mathbf{x})$ belongs to $C^r_{\psi(p,q)}$.
\end{proof}

Proposition~\ref{orbits}  implies that  each orbit of $\varphi_r$  is either entirely contained in the 2-dimensional algebraic variety $C^r_{(1,1)}$  or circulates between four pairwise disjoint algebraic varieties of dimension 2. So the dynamics of $\varphi_r$ can be studied through the restriction of $\varphi_r$ to $C^r_{(1,1)}$ and through the restrictions of $\varphi_r^{(4)}$ to  sets $C^r_{(p,q)}$ with $(p,q)\neq (1,1)$. In the next proposition we obtain explicit expressions of these restricted maps.

\begin{proposition}\label{Restrictions}
Let  $\varphi_r$ denote the map \eqref{Family1} and $C^r_{(p,q)}$  denote the algebraic variety of dimension 2 defined by  \eqref{Crpq}. Then,
\begin{enumerate}
\item $C^r_{(1,1)}$ is invariant under $\varphi_r$ and the restriction ${\bar\varphi}_r =\varphi_r\restr{C^r_{(1,1)}}$ is given in the coordinates $(x_1,x_2)$ by
\begin{equation}\label {Restr1}
\bar{\varphi}_r(x_1,x_2)=\left(x_2,2\frac{x_2^r}{x_1}\right).
\end{equation}
\item  $C^r_{(p,q)}$  is invariant under $\varphi_r^{(4)}$ and the restriction ${\widetilde\varphi}_r=\varphi^{(4)}_r\restr{C^r_{(p,q)}}$  is given in  the coordinates $(x_1,x_4)$ by
\begin{equation}\label{Restr4}
\widetilde{\varphi}_{r}(x_1,x_4)=\left(\lambda \frac{x_4^{r^2-2}}{x_1},\lambda^{r^2-1}\frac{x_4^{(r^2-3)(r^2-1)}}{x_1^{r^2-2}}\right),
\end{equation}
with 
\begin{equation}\label {lambda}
\lambda=\frac{(1+p^r)^2 (1+q^r)^r}{q^2 p^r}.
\end{equation}

\end{enumerate}
\end{proposition}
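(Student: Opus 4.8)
The plan is to reduce everything to direct substitution into the formula \eqref{Family1} for $\varphi_r$, using at each step the constraints defining $C^r_{(p,q)}$ to eliminate the coordinates that are not being tracked. The two invariance assertions are in fact already contained in Proposition~\ref{orbits}: its item~2 gives the invariance of $C^r_{(1,1)}$ under $\varphi_r$, and its item~1 gives the invariance of every $C^r_{(p,q)}$ under $\varphi_r^{(4)}$. So the real content of the statement is the explicit form of the two restricted maps, and I would obtain each by an essentially mechanical computation.

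For item~1 I would parametrize a point of $C^r_{(1,1)}$ by $(x_1,x_2)$, namely $\mathbf{x}=\bigl(x_1,x_2,x_2,2x_2^r/x_1\bigr)$, substitute into \eqref{Family1}, and simplify each output coordinate using $x_3=x_2$ and $x_4=2x_2^r/x_1$. One finds $\varphi_r(\mathbf{x})=\bigl(x_2,\,2x_2^r/x_1,\,2x_2^r/x_1,\,2^{r+1}x_2^{r^2-1}/x_1^r\bigr)$, which is again of the form $\bigl(y_1,y_2,y_2,2y_2^r/y_1\bigr)$ with $(y_1,y_2)=(x_2,2x_2^r/x_1)$; this simultaneously re-confirms the invariance of $C^r_{(1,1)}$ and yields \eqref{Restr1}.

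For item~2 the key step is the one-step transition in planar coordinates. Writing a point of $C^r_{(p,q)}$ as $\mathbf{x}=\bigl(x_1,x_2,px_2,(1+p^r)x_2^r/(qx_1)\bigr)$ and substituting into \eqref{Family1}, the constraints collapse the third and fourth components and give $\varphi_r(\mathbf{x})=\bigl(px_2,\,x_4,\,qx_4,\,(1+q^r)x_4^r/x_2\bigr)$ with $x_4=(1+p^r)x_2^r/(qx_1)$; this point lies in $C^r_{(q,1/p)}$, in agreement with the cycle in Proposition~\ref{orbits}, and in the coordinates $(x_1,x_2)$ the transition is the monomial map $T_{p,q}(x_1,x_2)=\bigl(px_2,\,(1+p^r)x_2^r/(qx_1)\bigr)$. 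Consequently $\varphi_r^{(4)}\restr{C^r_{(p,q)}}$, read in the coordinates $(x_1,x_2)$, equals the composition $T_{1/q,p}\circ T_{1/p,1/q}\circ T_{q,1/p}\circ T_{p,q}$.

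It then remains to carry out this fourfold composition of monomial maps and to re-express the result in the coordinates $(x_1,x_4)$ via $x_2^r=qx_1x_4/(1+p^r)$ --- used both to eliminate the input $x_2$ and, since the image again lies in $C^r_{(p,q)}$, to recover the output $x_4$ from the output $(x_1,x_2)$. Tracking exponents, the first component of the composition carries $x_2$ to the power $r(r^2-2)$, a multiple of $r$, so the substitution produces a genuine monomial in $(x_1,x_4)$, and collecting the powers of $1+p^r$, $1+q^r$, $p$ and $q$ assembles exactly the constant $\lambda$ of \eqref{lambda}, giving the first component of \eqref{Restr4}; the same bookkeeping on the second component produces the factor $\lambda^{r^2-1}$ and the exponents $(r^2-3)(r^2-1)$ and $r^2-2$. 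The only real labour is this exponent bookkeeping through the four compositions; there is no conceptual obstacle, and the $r$-th-root ambiguity in parametrizing $C^r_{(p,q)}$ by $(x_1,x_4)$ never actually intervenes, precisely because every $x_2$-exponent that must be substituted turns out to be divisible by $r$.
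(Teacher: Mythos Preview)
Your argument is correct. Item~1 is handled exactly as in the paper (a direct substitution). For item~2, however, your route differs from the paper's. You restrict first and then iterate: you read off the one-step transition $T_{p,q}:C^r_{(p,q)}\to C^r_{(q,1/p)}$ in the planar coordinates $(x_1,x_2)$, compose the four monomial maps $T_{1/q,p}\circ T_{1/p,1/q}\circ T_{q,1/p}\circ T_{p,q}$ around the $\psi$-cycle, and only at the end convert to $(x_1,x_4)$ using the leaf relation $x_2^r=qx_1x_4/(1+p^r)$. The paper instead iterates first and restricts afterwards: it computes $\varphi_r^{(4)}$ once and for all on $\Rb_+^4$, obtaining closed monomial expressions for the four components with a common prefactor $l(\mathbf{x})=\dfrac{(x_1^rx_4^r+(x_2^r+x_3^r)^r)^r}{x_1^{r^2-2}x_2^rx_3^rx_4^{r^2-2}}$, and then observes that $l$ is constant on each $C^r_{(p,q)}$ with value $\lambda$. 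Your approach keeps the bookkeeping two-dimensional at the cost of tracking constants through four compositions and a final change of chart; the paper's approach does a single (heavier) four-variable computation but yields the globally defined function $l$, which is reused later in the proofs of Theorems~\ref{tr=1} and~\ref{tr>2}. Either way the result is the same, and your remark that all $x_2$-exponents appearing are multiples of $r$ (so no root ambiguity arises) is exactly what makes the final coordinate change clean.
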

\begin{proof}
The restricted map ${\bar\varphi}_r$ is  obtained by a straightforward computation.  To obtain $\widetilde{\varphi}_{r}$, note that the computation of 
$\varphi_r^{(4)}(x_1,x_2,x_3,x_4)= (u_1,u_2,u_3,u_4)$ gives  
\begin{align*}
u_1&=l(\mathbf{x}) \frac{x_4^{r^2-2}}{x_1}, &\quad u_2&=l^r(\mathbf{x})\frac{x_2x_4^{r^3-3r}}{x_1^r},\\
u_3&=l^r(\mathbf{x})\frac{x_3x_4^{r^3-3r}}{x_1^r},&\quad u_4&=l^{r^2-1}(\mathbf{x})\frac{x_4^{(r^2-3)(r^2-1)}}{x_1^{r^2-2}},\\
\end{align*}
where 
\begin{equation}\label{Flambda}
l(\mathbf{x})=\frac{(x_1^rx_4^r+(x_2^r+x_3^r)^r)^r}{x_1^{r^2-2}x_2^rx_3^rx_4^{r^2-2}}.
\end{equation}
It is easy to see that the function $l$  is constant on   each $C^r_{(p,q)}$ and given by
$$\lambda = l(\mathbf{x})\restr{C^r_{(p,q)}}=\frac{(1+p^r)^2 (1+q^r)^r}{q^2 p^r}.$$ 
This leads directly to the expression of $\widetilde{\varphi}_{r}$ in the coordinates $(x_1,x_4)$. 
\end{proof}

\begin{remark} In the above proposition, the different choice of coordinates for $C^r_{(1,1)}$ and for $C^r_{(p,q)}$  has no particular meaning other than leading in each case to simpler expressions of the restricted maps. 
\end{remark}

It is easy to verify that the restricted maps \eqref{Restr1} and \eqref{Restr4} belong to  the group of maps from $\Rb^2_+$ to itself of the form
$$
f(x,y)=\left(\alpha x^ay^b,\beta x^cy^d\right),
$$
with $\alpha$ and $\beta$ real positive constants and  $a,b,c,d$ integers satisfying $ad-bc=1$. This group will be denoted by  $\Gamma$. The maps of $\Gamma$ are birational (rational with rational inverse) and preserve the symplectic form 
 \begin{equation}
 \label{w}
 \omega=\frac{1}{xy} dx\wedge dy,
 \end{equation}
in the sense that the pullback of $\omega$ by $f$ preserves $\omega$, that is $f^*\omega=\omega$.

\section{The group $\Gamma$}\label{gamma}

Using algebraic geometry techniques, it was proved in \cite{Blanc} that  the group of birational transformations of $\Cb^2$ preserving the symplectic form \eqref{w}   is generated by $SL(2, \Zb)$, the complex torus $(\Cb^*)^2$ and the globally 5-periodic (Lyness) map $(x,y)\mapsto (y, \frac{1+y}{x})$.  

In this section we study the group of birational maps $\Gamma$ and show that it is isomorphic to the semidirect product $SL(2,\Zb)\ltimes \Rb^2$.  We obtain  simplified forms ({\it normal forms}) for the maps of $\Gamma$ under conjugation and  describe the dynamics of those normal forms  which are relevant to our study.
 
\medskip
Let $\Gamma$ be the group of maps $ f:\Rb_+^2 \rightarrow \Rb^2_+$ defined by
\begin{equation} 
\label{f}
f(x,y)=\left(\alpha x^ay^b,\beta x^cy^d\right), \quad  \alpha, \beta \in \Rb_+, \;\; ad-bc=1, \,\,a,b,c,d\in\Zb.
\end{equation}
The map $i:\Rb_+^2\longrightarrow \Rb^2$ given by 
\begin{equation}\label{iso}
i(x,y)=(\log x,\log y)
\end{equation}
conjugates $f\in \Gamma$ to the affine map  in $\Rb^2$
$$g(u,v) =  ( a u + bv + \log \alpha,  c u + dv + \log \beta).$$
Note that $g$  is the composition of the translation   by the vector $\mathbf{v}= (\log \alpha,\log\beta)$ and the area preserving linear map represented by the $SL(2, \Zb)$ matrix $M=\begin{bmatrix}
a&b\\
c&d
\end{bmatrix}$.   Identifying $g$ with $(M,\mathbf{v})$, the map $i$  induces  an isomorphism between $\Gamma$ and the semidirect product
$$SL(2,\Zb)\ltimes \Rb^2= \{ (M,\mathbf{v}): M\in SL(2,\Zb), \mathbf{v} \in \Rb^2  \}$$
with group multiplication defined by $(M,\mathbf{v})\cdot (N,\mathbf{w})= (MN, \mathbf{v}+M\mathbf{w})$.

\subsection{Normal forms} 

In the next theorem we  obtain simplified forms for elements of $\Gamma$  up to conjugation.  We show that apart from the following maps 
$$f_{(\alpha,\beta)}^{\pm}(x,y)=(\alpha x^{\pm 1}, \beta y^{\pm 1})$$
there are only two types of normal forms characterising the elements of $\Gamma$. 

Note that the maps $f_{(\alpha,\beta)}^{\pm}$ (corresponding in \eqref{f} to  $b=c=0$) do not appear in Proposition~\ref{Restrictions} as restricted maps and furthermore their dynamical behaviour is trivial. In fact, the map $f_{(\alpha,\beta)}^+$ is conjugate to a translation and $f_{(\alpha,\beta)}^-$ is globally 2-periodic. 

\begin{theorem}\label{FormasNormais}
Let
$$f(x,y)=\left(\alpha x^ay^b,\beta x^cy^d\right), \quad \alpha, \beta>0, \;\; ad-bc=1,$$
be an element of $\Gamma$ with $b^2+c^2\neq 0$. Then $f$ is conjugate  to one of the following maps
\begin{enumerate}
\item $f_k(x,y)=(y,\frac{y^k}{x})$ with  $k=a+d$ if $a+d\neq 2$;
\item $f_{2,\xi}(x,y)=(y,\xi\frac{y^2}{x})$ if $a+d=2$, where 
\begin{equation}\label{xi}
\xi=\begin{cases}
{\displaystyle \frac{\alpha^c}{\beta^{a-1}}},& \text{if $c\neq 0$}\\
 \beta^b, & \text{if $c=0$.}
\end{cases}
\end{equation}

\end{enumerate}
\end{theorem}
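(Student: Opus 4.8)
The plan is to work entirely on the affine side, using the isomorphism $i$ from \eqref{iso} that identifies $\Gamma$ with $SL(2,\Zb)\ltimes\Rb^2$ and identifies the conjugating group $G\simeq GL(2,\Zb)\ltimes\Rb^2$. Under this identification, $f$ corresponds to the pair $(M,\mathbf{v})$ with $M=\begin{bmatrix}a&b\\c&d\end{bmatrix}\in SL(2,\Zb)$ and $\mathbf{v}=(\log\alpha,\log\beta)$, and conjugating $f$ by an element of $G$ corresponding to $(P,\mathbf{w})$ sends $(M,\mathbf{v})$ to $(PMP^{-1},\, P\mathbf{v}+(I-PMP^{-1})\mathbf{w})$. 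So the proof splits into two independent tasks: (i) bring the linear part $M$ into a canonical form by conjugating with $P\in GL(2,\Zb)$, and (ii) for the resulting canonical $M$, use the freedom in $\mathbf{w}$ to kill as much of the translation part $\mathbf{v}$ as possible.

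For task (i): since $M\in SL(2,\Zb)$ and $b^2+c^2\neq 0$ (so $M$ is not diagonal), I would show $M$ is $GL(2,\Zb)$-conjugate to the companion-type matrix $C_k=\begin{bmatrix}0&-1\\1&k\end{bmatrix}$ where $k=\operatorname{tr}M=a+d$. The standard argument: pick a vector $e_1\in\Zb^2$ that is part of a $\Zb$-basis and such that $e_1$ and $Me_1$ are independent — when $M$ is non-diagonal one checks such $e_1$ exists (e.g.\ taking $e_1$ to be the appropriate standard basis vector, distinguishing whether $c\neq0$ or $b\neq0$), and then $\{e_1,Me_1\}$ is automatically a $\Zb$-basis because $\det[e_1\mid Me_1]=\pm1$ by the determinant and primitivity; in that basis $M$ has the companion form, with the sign of the off-diagonal fixed to $+1$ by a further diagonal $\pm1$ conjugation if needed. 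Note that $C_k$ is exactly the linear part of the map $f_k(x,y)=(y,y^k/x)$, since $f_k$ corresponds to $M=\begin{bmatrix}0&-1\\1&k\end{bmatrix}$ with zero translation.

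For task (ii): with linear part now $C_k$, I need to solve $(I-C_k)\mathbf{w}=-\mathbf{v}'$ for $\mathbf{w}\in\Rb^2$, where $\mathbf{v}'$ is the (already transformed) translation vector; this is possible iff $I-C_k$ is invertible, i.e.\ iff $1$ is not an eigenvalue of $C_k$, i.e.\ iff $\det(I-C_k)=1-k+1=2-k\neq 0$, that is $k\neq 2$. This gives case 1: when $a+d=k\neq 2$, the whole translation can be removed and $f$ is conjugate to $f_k$. When $k=2$, $I-C_2=\begin{bmatrix}1&1\\-1&-1\end{bmatrix}$ has rank $1$, image spanned by $(1,-1)$, so one can shift $\mathbf{v}'$ along that line: the residual invariant is the component of $\mathbf{v}'$ transverse to the image, which after exponentiating back gives a single positive scalar $\xi$. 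Here the remaining work is the bookkeeping that produces formula \eqref{xi}: one must track what the basis change $P$ from task (i) does to the original $\mathbf{v}=(\log\alpha,\log\beta)$, and the two cases $c\neq0$ versus $c=0$ arise simply from which standard basis vector was used as $e_1$ in the companion-form construction. Taking $\xi=e^{(\text{residual})}$ yields $f_{2,\xi}(x,y)=(y,\xi y^2/x)$.

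The main obstacle is task (i) — the integrality argument that an arbitrary non-diagonal $SL(2,\Zb)$ matrix is $GL(2,\Zb)$-conjugate to its companion matrix $C_{\operatorname{tr}M}$, and in particular producing a primitive vector $e_1$ with $e_1,Me_1$ forming a $\Zb$-basis; over $\Rb$ or $\Cb$ this is the trivial rational canonical form, but over $\Zb$ one must genuinely use that $\det[e_1\mid Me_1]=\pm1$. Once the linear part is standardized, task (ii) is a short linear-algebra computation over $\Rb$ (solvability of $(I-C_k)\mathbf{w}=-\mathbf{v}'$), and the only care needed is the explicit tracking of constants through the two sub-cases of \eqref{xi}, which I would present as a direct computation rather than grinding through it in the sketch.
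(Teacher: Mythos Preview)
Your two-step plan (normalize the linear part, then absorb the translation) is the right structure, and task~(ii) is handled correctly. The gap is in task~(i): it is \emph{not} true that every non-diagonal $M\in SL(2,\Zb)$ is $GL(2,\Zb)$-conjugate to the companion matrix $C_k$ of trace $k=\operatorname{tr}M$. Your argument breaks at the step ``$\{e_1,Me_1\}$ is automatically a $\Zb$-basis because $\det[e_1\mid Me_1]=\pm1$ by the determinant and primitivity'': primitivity of $e_1$ together with linear independence of $e_1,Me_1$ does not force that determinant to be a unit. With $e_1=(1,0)^T$ one gets $\det[e_1\mid Me_1]=c$, which can be any nonzero integer. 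Worse, for some $M$ \emph{no} primitive $e_1$ works: take $M=\left[\begin{smallmatrix}3&7\\2&5\end{smallmatrix}\right]\in SL(2,\Zb)$ of trace $8$; writing $e_1=(p,q)^T$ gives $\det[e_1\mid Me_1]=2p^2+2pq-7q^2$, and completing the square yields $(2p+q)^2-15q^2=\pm2$, which is impossible modulo~$5$. Hence this $M$ is not $GL(2,\Zb)$-conjugate to $C_8$. (In general the number of $GL(2,\Zb)$-classes with fixed trace $k$, $|k|>2$, is governed by the class number of discriminant $k^2-4$ and can exceed~$1$.)

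The fix is simply not to insist on integer conjugacy. The theorem only asserts conjugacy by a homeomorphism of $\Rb_+^2$, and once you allow the linear part of the conjugating map to lie in $GL(2,\Rb)$ task~(i) becomes the trivial rational canonical form over~$\Rb$; your task~(ii) then goes through unchanged, including the bookkeeping for~\eqref{xi}. This is exactly what the paper does by direct computation: when $c\neq0$ it conjugates by the explicit homeomorphism $\pi(x,y)=(y^ax^{-c},\beta^a\alpha^{-c}y)$, whose linear part has determinant $-c$, and then rescales to remove the remaining constant when $a+d\neq2$; the case $c=0$ is reduced to this by the swap $\sigma(x,y)=(y,x)$. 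Note in particular that $\pi$ lies in a group isomorphic to $GL(2,\Zb)\ltimes\Rb^2$ only when $|c|=1$, so Remark~\ref{remclass} is somewhat optimistic in general --- but $\pi$ is a valid homeomorphism of $\Rb_+^2$ for every $c\neq0$, and that is all the statement requires.
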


\begin{proof}  If $c\neq 0$,  considering the homeomorphism  $\pi$ of $\Rb_+^2$ given by 
 $$\pi(x,y)=(y^ax^{-c},\beta^a\alpha^{-c}y),$$
it is easy to check that $\pi\circ f=g\circ\pi$, where $g$ is the map 
$$g(x,y)=(y,K \frac{y^{a+d}}{x})\quad {\rm with} \quad K=\beta(\beta^a\alpha^{-c})^{1-(a+d)}.$$
If $a+d=2$ the map $g$ is the map $f_{2,\xi}$ with $\xi= \frac{\alpha^c}{\beta^{a-1}}$ . If $a+d\neq 2$, taking the following map $\Pi$ 
$$\Pi(x,y)=K^{\frac{1}{a+d-2}}(x,y),$$
we have $\Pi\circ g=f_{a+d}\circ\Pi$, that is  $\Pi\circ\pi\circ f=f_{a+d}\circ\Pi\circ\pi$.

If $c=0$, the hypothesis $b^2+c^2\neq 0$  implies that $b\neq 0$. Considering  the  involution $\sigma(x,y)=(y,x)$, which interchanges $c$ and $b$,  the problem reduces  to the previous cases. In fact,  $\sigma\circ f\circ \sigma=\left(\beta x^d,\alpha x^by^a\right)$ is conjugate to $f_{a+d}$ if $a+d\neq 2$ and to $f_{2,\xi}$ with $\xi= \frac{\beta^b}{\alpha^{d-1}}=\beta^b$ if $a+d=2$. 
\end{proof}

\begin{remark} 
\label{remclass} It is worth noting that the conjugacies in the proof of the above theorem belong to a group $G$ which is isomorphic to $GL(2,\Zb)\ltimes \Rb^2$. The result in the theorem may be rephrased as follows. Up to conjugation in $G$, the elements $(M,\mathbf{v})\in SL(2,\Zb)\ltimes \Rb^2$, with  $M\neq \pm I$, are parametrized by  the trace of $M$ if $\operatorname{tr}\, M\neq 2$, and by a real parameter $\xi$ which depends on $M$ and $\mathbf{v}$ through the expression \eqref{xi} if $\operatorname{tr}\, M=2$.

As we will see the explicit conjugacies given in  the proof of Theorem~\ref{FormasNormais}  (and in Corollary~\ref{cor1} below) play a key role in the study of the dynamics performed in the following sections. We believe that if one was only interested  in the normal forms   {\it per se}, these might be obtained by quoting  results from group theory scattered in the literature. 
\end{remark}

\bigbreak
As  a consequence of  Theorem~\ref{FormasNormais} the  restricted maps (\ref{Restr1}) and (\ref{Restr4})  are conjugate to the normal forms given in the following corollary.

\begin{corollary}\label{cor1}
Let $r$ and $\lambda$ be a positive integer and a positive real number, respectively. Consider the maps
$$\bar{\varphi}_r(x,y)=(y,2\frac{y^r}{x}), \qquad \widetilde{\varphi}_{r}(x,y)=\left(\lambda \frac{y^{r^2-2}}{x},\lambda^{r^2-1}\frac{y^{(r^2-3)(r^2-1)}}{x^{r^2-2}}\right).$$
\begin{enumerate}
\item If $r=2$, then
\begin{itemize}
\item[i)]  $\bar{\varphi}_2$ is already in normal form: $\bar{\varphi}_2=f_{2,2}$;
\item[ii)] $\widetilde\pi_2\circ \widetilde{\varphi}_2= f_{2, \lambda^4}\circ\widetilde\pi_2$ with
\begin{equation}\label{eq:tildefiproj1}
 \widetilde \pi_2(x,y)= \left(\frac{x^2}{y},  \frac{y}{\lambda}\right),\qquad
  f_{2, \lambda^4}(x,y)= \left(y, \lambda^4 \frac{y^2}{x}\right).
\end{equation}
\end{itemize}
\item If $r\neq 2$ then,
\begin{itemize}
\item[i)] $\bar{\pi}_r\circ \bar{\varphi}_r= f_r\circ \bar{\pi}_r$ with
\begin{equation}\label{eq:barfiproj}
\bar{\pi}_r(x,y)=  2^{\frac{1}{r-2}} ( x, y), \qquad
 f_r(x,y)= \left(y,   \frac{y^r}{x}\right);
\end{equation}

\item[ii)] $\widetilde{\pi}_r\circ \widetilde{\varphi}_r= f_{(r^2-2)^2-2}\circ \widetilde{\pi}_r$ with
\begin{align}
\label{eq:tildefiproj}
\widetilde{\pi}_r(x,y)  = \lambda^{\frac{1}{r^2-4}} \left(\lambda \frac{x^{r^2-2}}{y}, y\right) &,\nonumber \\ 
f_{(r^2-2)^2-2}(x,y) = \left(y, \frac{y^{(r^2-2)^2-2}}{x}\right). &
\end{align}
\end{itemize}
\end{enumerate}
\end{corollary}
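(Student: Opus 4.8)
The plan is to deduce the corollary by a direct application of Theorem~\ref{FormasNormais} to each of the four maps, but reusing the \emph{explicit} conjugacies constructed in that theorem's proof rather than only its existence statement, since it is precisely these explicit conjugacies that will be needed in Section~\ref{dynamics}. First I would put each restricted map in the standard form \eqref{f}, reading off the exponent matrix and the constants. For $\bar{\varphi}_r(x,y)=(y,2y^r/x)$ this gives $(a,b,c,d)=(0,1,-1,r)$ with $(\alpha,\beta)=(1,2)$; for $\widetilde{\varphi}_r$ it gives $a=-1$, $b=r^2-2$, $c=2-r^2$, $d=(r^2-3)(r^2-1)$ with $\alpha=\lambda$, $\beta=\lambda^{r^2-1}$. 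One checks immediately that $ad-bc=1$ (already recorded in the text, since the restricted maps lie in $\Gamma$) and that $b^2+c^2\neq0$: it equals $2$ in the first case and $2(r^2-2)^2$ in the second, both nonzero for every positive integer $r$. Hence Theorem~\ref{FormasNormais} applies to all four maps.

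Next I would compute the trace $a+d$, which governs the dichotomy in the theorem. For $\bar{\varphi}_r$ it equals $r$; for $\widetilde{\varphi}_r$ a short simplification gives $a+d=(r^2-3)(r^2-1)-1=(r^2-2)^2-2$. In both cases the trace equals $2$ exactly when $r=2$, which is why the corollary splits into the cases $r=2$ and $r\neq2$. For $r=2$ one is in case~2 of the theorem and must evaluate $\xi$ from \eqref{xi}. For $\bar{\varphi}_2$ we have $c=-1\neq0$, so $\xi=\alpha^c/\beta^{a-1}=1/2^{-1}=2$; moreover the conjugating homeomorphism $\pi$ built in the proof specializes to the identity here, which is why $\bar{\varphi}_2$ \emph{is} the normal form $f_{2,2}$ and no conjugacy is needed. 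For $\widetilde{\varphi}_2$ we have $c=-2\neq0$, so $\xi=\lambda^{-2}/\lambda^{-6}=\lambda^4$, and the same $\pi(x,y)=(y^ax^{-c},\beta^a\alpha^{-c}y)$ specializes to $(x,y)\mapsto(x^2/y,\,y/\lambda)$, which is exactly the $\widetilde{\pi}_2$ in \eqref{eq:tildefiproj1}.

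For $r\neq2$ one is in case~1 with $k=a+d$, i.e.\ $k=r$ for $\bar{\varphi}_r$ and $k=(r^2-2)^2-2$ for $\widetilde{\varphi}_r$, and I would trace the two-step conjugacy of the proof: first $\pi(x,y)=(y^ax^{-c},\beta^a\alpha^{-c}y)$ carries $f$ to $g(x,y)=(y,Ky^{a+d}/x)$ with $K=\beta(\beta^a\alpha^{-c})^{1-(a+d)}$, and then the dilation $\Pi=K^{1/(a+d-2)}\operatorname{Id}$ carries $g$ to $f_{a+d}$; so $\bar{\pi}_r$, resp.\ $\widetilde{\pi}_r$, equals $\Pi\circ\pi$. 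For $\bar{\varphi}_r$ the first step is trivial ($\pi=\operatorname{Id}$, $K=2$), leaving $\bar{\pi}_r=2^{1/(r-2)}\operatorname{Id}$ as in \eqref{eq:barfiproj}. For $\widetilde{\varphi}_r$ one computes $\beta^a\alpha^{-c}=\lambda^{-1}$, hence $K=\lambda^{(r^2-1)+(r^2-2)^2-3}=\lambda^{r^2(r^2-3)}$; since $a+d-2=(r^2-2)^2-4=r^2(r^2-4)$, the dilation factor is $\lambda^{(r^2-3)/(r^2-4)}$, and composing with $\pi(x,y)=(x^{r^2-2}/y,\,y/\lambda)$ and using $(r^2-3)/(r^2-4)=1+1/(r^2-4)$ yields precisely the $\widetilde{\pi}_r$ displayed in \eqref{eq:tildefiproj}.

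The only real obstacle is computational: keeping the powers of $\lambda$ (and the base $2$) straight through the composition $\Pi\circ\pi$ in the $r\neq2$ case of $\widetilde{\varphi}_r$, where one leans on the polynomial identities $(r^2-1)+(r^2-2)^2-3=r^2(r^2-3)$ and $(r^2-2)^2-4=r^2(r^2-4)$ to see the exponents collapse to the stated closed forms. No idea beyond Theorem~\ref{FormasNormais} is involved; the content of the corollary is exactly that the abstract normal-form procedure, when specialized to these maps, produces these particular conjugacies with these particular constants.
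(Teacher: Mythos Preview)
Your proposal is correct and follows exactly the paper's approach: verify that both restricted maps satisfy the hypotheses of Theorem~\ref{FormasNormais} with $c\neq 0$, compute the traces $a+d=r$ and $a+d=(r^2-2)^2-2$, observe that both equal $2$ precisely when $r=2$, and then specialize the explicit conjugacies $\pi$ and $\Pi$ from the proof of that theorem. The paper's own proof records only the first three observations and then refers back to the theorem's proof, whereas you carry out the specialization of $\Pi\circ\pi$ in full (including the polynomial identities that simplify the $\lambda$-exponents for $\widetilde{\varphi}_r$); this is more explicit but not a different argument.
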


\begin{proof}
Note that both maps ${\bar\varphi}_r $ and $\widetilde\varphi_r $ verify the hypotheses of Theorem~\ref{FormasNormais} with $c\neq 0$, for any $r$. Furthermore,  $a+d=r$ for $\bar{\varphi}_r$ and  $a+d=(r^2-2)^2-2$ for $\widetilde{\varphi}_r$. Also, for both maps $a+d=2$ if and only if   $r=2$. The result then follows from the proof of Theorem~\ref{FormasNormais}.
\end{proof}

\subsection{Dynamics of $f_k$ and $f_{2,\xi}$}\label{dyngamma}

To understand the dynamics of the restricted maps \eqref{Restr1} and \eqref{Restr4} it is enough to analyse the dynamics of the maps $f_k$ and $f_{2,\xi}$ given in Theorem~\ref{FormasNormais}. The dynamics of these maps can be better described by using first integrals and the graphical representation of their level sets.

A \emph{first integral} of a  given map $f: U\subset \Rb^m \rightarrow U$ is a non constant function $I: U \rightarrow \Rb$ which is constant on $f$-orbits, that is
$$I \circ f(\mathbf{x}) = I(\mathbf{x}), \quad \text{for all $\mathbf{x}\in U$}.$$
Any $f$-orbit is therefore confined to a level set of a first integral  of $f$.

\subsubsection*{Dynamics  of $f_k(x,y)=(y, \frac{y^k}{x}),\,\, k\neq 2$}

We now study the dynamics of the  maps $f_k(x,y)=(y, \frac{y^k}{x})$, with $k\neq 2$. Although this  study can be performed for any integer $k$, we restrict (in Lemma~\ref{lemafk} below) to the integers $k\geq -1$  since smaller values do not appear in the restricted maps of Proposition~\ref{Restrictions}.

The dynamics in $\Rb^2_+$ of the map $f_k(x,y)=(y, \frac{y^k}{x})$ is easily obtained, via the isomorphism \eqref{iso}, from the properties of the linear map 
\begin{equation}
\label{gk}
g_k(u,v) = (v, -u+kv),
\end{equation}
which is represented by a $SL(2,\Zb)$ matrix. Such linear maps  leave invariant a quadratic form $Q_k$ (see for instance \cite{LagRa}).   Composing this quadratic form with the isomorphism $i(x,y)=(\log x, \log y)$ gives the following first integral of the map $f_k$: 
\begin{equation}\label{int}
I_k(x,y) = \log ^2(x)-k\log(x)\log (y)+\log^2(y). 
\end{equation}

Consequently,  each orbit of $f_k$  lies on a level set of  $I_k$. These level sets are displayed in Figure~\ref{fig2} for $k<2$ and in Figure~\ref{fig3} for $k>2$.   Further information on the orbits of $f_k$ can be obtained directly from the well known dynamical properties of the  linear  map $g_k$ in \eqref{gk}.  The next lemma summarizes the  dynamics of the map $f_k$.

\begin{figure}[htb]
\begin{center}
{\includegraphics[scale=0.8]{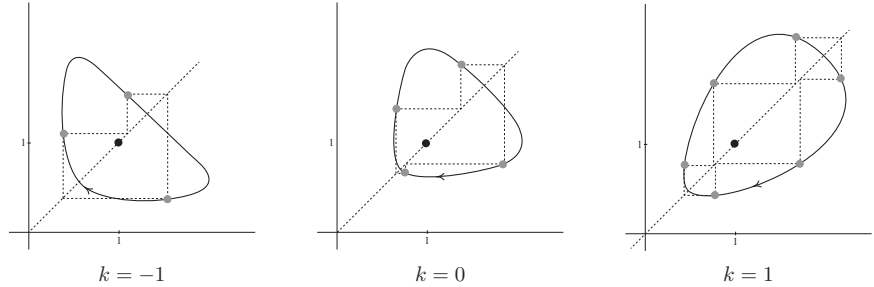}}
\caption{ Level sets of $I_k$ for  $k\in \{-1,  0,1\}$ and graphical construction of the $f_k$-orbit in the level set. The arrows indicate how the forward orbit moves along the level set.} \label{fig2}
\end{center}
\end{figure}

\begin{figure}[htb]
\begin{center}
{\includegraphics[scale=0.9]{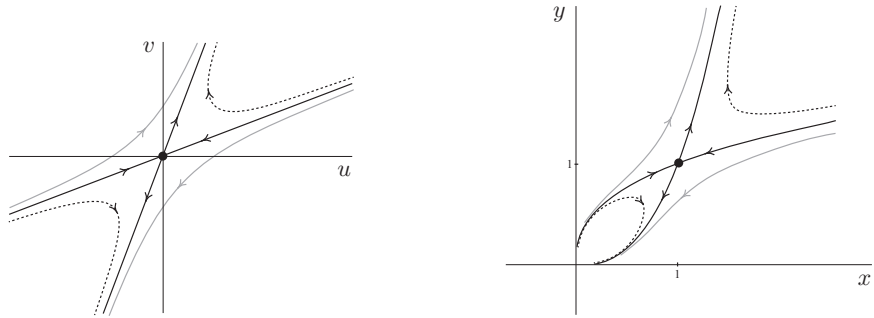}}
\caption { On the left the level sets of the quadratic form $Q_k$ for  $k>2$ and on the right the corresponding level sets of $I_k$. The arrows indicate how the forward orbit moves along the level set.}\label{fig3}
\end{center}
\end{figure}

\begin{lemma}
\label{lemafk}
Let $f_k: \Rb_+^2 \rightarrow \Rb_+^2$  be given by $f_k(x,y)=(y, \frac{y^k}{x})$ with   $k\in \Zb$. Then,
\begin{enumerate}
\item $f_{-1}$ is globally $3$-periodic,  $(1,1)$ is  its unique fixed point and all the other points have minimal period 3.
\item $f_{0}$ is globally $4$-periodic, $(1,1)$ is its unique fixed point and all the other points have minimal period 4.
\item $f_{1}$ is globally 6-periodic, $(1,1)$ is its unique fixed point and all the other points have minimal period 6.
\item If $k>2$ then $(1,1)$ is the unique fixed point of $f_k$ and there are no more  periodic points. Moreover, each $f_k$-orbit remains in the same connected component of a level curve of the first integral \eqref{int}, and:
\begin{enumerate}
\item the $f_k$-orbit of $P\in\Rb^2_+$ converges to $(1,1)$ if $P$ belongs to the subset of $\Rb_+^2$ defined  by
\begin{equation}\label{setF}
{\cal F}_k=\{I_k(x,y)=0\} \cap \{ (x<1, \, x<y)\,  \text{or}\, (x>1, \, x>y) \}
 \end{equation}
\item the $f_k$-orbit of $P\in\Rb^2_+$ converges to $(0,0)$ if $P$ belongs to the subset of $\Rb_+^2$ defined  by
\begin{align}\nonumber \label{setZ}
{\cal Z}_k&=\left\{I_k(x,y)<0, \, x<1\right\} \cup \left\{I_k(x,y)=0,\, x<1, \,x>y \right\} \\
&\qquad \cup \left\{I_k(x,y)>0,\, x>y \right\}.
 \end{align}
\item in any other case both components of $f_k^{(n)}(P)$ go to $+\infty$.
\end{enumerate}

\end{enumerate}
\end{lemma}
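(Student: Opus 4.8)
The plan is to transfer everything to the linear map $g_k(u,v)=(v,-u+kv)$ via the conjugating isomorphism $i(x,y)=(\log x,\log y)$ of \eqref{iso}, so that $f_k$-orbits in $\Rb_+^2$ correspond exactly to $g_k$-orbits in $\Rb^2$, periodic points correspond to periodic points, $(1,1)$ corresponds to the origin, convergence to $(1,1)$ corresponds to convergence to $0$, convergence to $(0,0)$ corresponds to $\|g_k^{(n)}\|\to\infty$ in the third quadrant direction, and blow-up of both coordinates to $+\infty$ corresponds to $g_k^{(n)}(P)$ escaping to infinity with both coordinates eventually positive. The matrix of $g_k$ is $M_k=\begin{bmatrix}0&1\\-1&k\end{bmatrix}\in SL(2,\Zb)$ with trace $k$ and characteristic polynomial $t^2-kt+1$.

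For items 1--3, I would just compute: for $k=-1,0,1$ the eigenvalues are primitive $6$th, $4$th and... more precisely, $M_{-1}$ has order $6$ but $M_{-1}^3=-I$ acts on $\Rb^2$; here a small care is needed because it is $g_k$ (a linear map on $\Rb^2$), not its projectivization, whose period we want. One checks directly: $M_{-1}^3=-I\neq I$ while $M_{-1}^6=I$, yet in the statement $f_{-1}$ is claimed globally $3$-periodic — so in fact I should recompute, $M_{-1}=\begin{bmatrix}0&1\\-1&-1\end{bmatrix}$ satisfies $M_{-1}^3=I$ (eigenvalues the primitive cube roots of unity, since $t^2+t+1$), $M_0=\begin{bmatrix}0&1\\-1&0\end{bmatrix}$ has $M_0^4=I$ and $M_0^2=-I\neq I$, and $M_1=\begin{bmatrix}0&1\\-1&1\end{bmatrix}$ has $t^2-t+1=0$, eigenvalues primitive $6$th roots of unity, $M_1^6=I$ with no smaller power equal to $I$. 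In each case the only fixed point of $g_k$ is $0$ (since $1$ is not an eigenvalue), so $(1,1)$ is the unique fixed point of $f_k$, and every other point has minimal period exactly the order of $M_k$ because for a finite-order matrix with no eigenvalue $1$ the minimal period of a nonzero vector divides the order and cannot be a proper divisor $d$ (else $M_k^d$ would fix a nonzero vector, forcing $1$ to be an eigenvalue of $M_k^d$, hence some $d$-th root of the eigenvalues of $M_k$ equals... ) — one dispatches this with a direct check that $M_k^d\neq I$ forces $M_k^d$ to have no eigenvalue $1$ for the relevant small $d$. These three cases are essentially routine linear algebra.

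For item 4, $k>2$ makes $M_k$ hyperbolic: the characteristic polynomial $t^2-kt+1$ has two real roots $\mu>1>1/\mu>0$, $\mu=\frac{k+\sqrt{k^2-4}}{2}$, both positive, with eigenlines $E^u$ and $E^s$. The quadratic form $Q_k(u,v)=u^2-kuv+v^2$ is $g_k$-invariant (it is the product of the two linear forms vanishing on $E^u$ and $E^s$, up to scalar, and $g_k$ scales one by $\mu$ and the other by $1/\mu$); hence $I_k=Q_k\circ i$ is a first integral of $f_k$, and orbits stay on level curves, indeed on connected components thereof by continuity and invariance. The level set $\{Q_k=0\}$ is the union of the two eigenlines; $\{Q_k=0,\ u<0\}$-type pieces and the open regions $\{Q_k<0\}$, $\{Q_k>0\}$ are the hyperbola branches and the four "sectors" cut out by the eigenlines. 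The dynamics on each: a nonzero point of $E^s$ moves toward $0$ under $g_k$ (eigenvalue $1/\mu<1$); a nonzero point of $E^u$ escapes to infinity along $E^u$; a point in a $Q_k<0$ or $Q_k>0$ region has its $E^u$-component multiplied by $\mu$ and $E^s$-component by $1/\mu$ each step, so it escapes to infinity asymptotic to the direction $E^u$. The whole content of (a)--(c) is then a bookkeeping translation: figure out which eigenline/sector, when pushed through $i^{-1}=(\exp,\exp)$, lands in the set ${\cal F}_k$ (the stable line, giving convergence to $(1,1)$), which lands in ${\cal Z}_k$ (those points whose $g_k$-orbit escapes toward the open third quadrant $\{u<0,v<0\}$, so $x=e^u,y=e^v\to 0$), and which is left over (escape toward the open first quadrant, giving both coordinates $\to+\infty$). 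I expect the main obstacle to be precisely this last translation: verifying that the sign conditions $I_k(x,y)<0,\,x<1$ etc. defining ${\cal F}_k$ and ${\cal Z}_k$ correspond correctly — in logarithmic coordinates $u=\log x,v=\log y$ — to "lies on the stable eigenline" and "lies in (or eventually enters) the third-quadrant-escaping sector," and in particular keeping straight which branch/sector of the $Q_k$-level set the forward orbit actually moves into (the arrows in Figures~\ref{fig2}, \ref{fig3}), since the eigenvector coordinates of $M_k$ are irrational and one must argue geometrically rather than by explicit formula. The periodicity claims (no periodic points other than $(1,1)$) are immediate from hyperbolicity: $M_k^n=I$ is impossible for $n\geq 1$ since $\mu^n\neq 1$.
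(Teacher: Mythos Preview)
Your proposal is correct and follows exactly the route the paper takes: the paper does not give a formal proof of this lemma but simply refers to the conjugacy $i$ with the linear map $g_k$, the invariant quadratic form $Q_k$ (hence the first integral $I_k$), and the ``well known dynamical properties'' of $g_k$, together with Figures~\ref{fig2} and~\ref{fig3}. Your outline spells out precisely those details (finite-order elliptic cases $k=-1,0,1$ via the order of $M_k$, hyperbolic case $k>2$ via the eigenlines of $M_k$ and the sign regions of $Q_k$), so there is nothing to add.
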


\subsubsection*{Dynamics  of $f_{2,\xi}(x,y)=(y, \xi \frac{y^2}{x})$}

The dynamics of the map $f_{2,\xi}(x,y)=(y,\xi\frac{y^2}{x})$ with $\xi\in \Rb_+$ is obtained from the general expression of  the iterates $f^{(n)}_{2,\xi}(x,y)$. This expression can be computed by applying Lemma 1 in \cite{InHeEs}.   An alternative way is to consider the conjugate affine map 
$$g_{2,\xi}(u,v) = (v, -u+2v+\log \xi),$$
which can be identified with the $SL(3,\Rb)$ matrix 
$$X=\left[\begin{array}{cc|c}
0&1&0\\
-1&2&\log \xi\\
\hline
0&0&1
\end{array}\right].$$
By computing the $n$th power of $X$ we arrive at the expression of $g_{2,\xi}^{(n)}$ from which we derive:
$$
f_{2,\xi}^{(n)}(x,y)=\xi^{\frac{n(n-1)}{2}}\frac{y^n}{x^n}\left(x,\xi^ny\right), \quad n\geq 0.
$$
The consequences for the dynamics of $f_{2,\xi}$ follow immediately from this expression and are summarised in the next lemma. 

\begin{lemma}\label{lemaf2xi}
Let  $f_{2,\xi}:\Rb_+^2\longrightarrow \Rb_+^2$ be given by $f_{2,\xi}(x,y) = \left (y, \xi\frac{y^2}{x}\right )$ with $\xi\in  \Rb_+$. 
\begin{enumerate}
\item  If $\xi<1$,  then the map $f_{2,\xi}$ has  no periodic points and the $f_{2,\xi}$-orbit of any point converges to $(0,0)$.
\item  If $\xi=1$,   then  all the points of the form $(x,x)$ are fixed points of  $f_{2,\xi}$. Moreover  $f_{2,\xi}^{(n)}(x,y)$ converges to $(0,0)$ if $y<x$ and both components of   $f_{2,\xi}^{(n)}(x,y)$ go to $+\infty$ if $y>x$.
\item  If $\xi>1$,   then  $f_{2,\xi}$ has  no periodic points and both components of $f_{2,\xi}^{(n)}(P)$ go to $+\infty$ for any $P\in \Rb_+^2$.
\end{enumerate}
\end{lemma}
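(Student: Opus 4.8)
The plan is to read off the entire statement from the closed form of the $n$-th iterate
$$f_{2,\xi}^{(n)}(x,y)=\xi^{\frac{n(n-1)}{2}}\frac{y^n}{x^n}\left(x,\xi^n y\right),\qquad n\ge 0,$$
which is established just above the lemma. Writing the two components explicitly, $f_{2,\xi}^{(n)}(x,y)=\bigl(\xi^{n(n-1)/2}\,y^{n}x^{-(n-1)},\ \xi^{n(n+1)/2}\,y^{n+1}x^{-n}\bigr)$, so that the logarithm of each component has the shape $\tfrac{n(n-1)}{2}\log\xi+(\text{linear in }n)$. All three items will then follow from elementary sign bookkeeping in the exponents, together with a separate treatment of the periodic points.

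First I would settle the periodic points. If $f_{2,\xi}^{(n)}(x,y)=(x,y)$ for some $n\ge 1$, then dividing the second component by the first gives $\xi^{n}\,\dfrac{y}{x}=\dfrac{y}{x}$, hence $\xi^{n}=1$ and therefore $\xi=1$ since $\xi>0$. Thus for $\xi\neq 1$ the map $f_{2,\xi}$ has no periodic points at all. For $\xi=1$ the iterate formula collapses to $f_{2,1}^{(n)}(x,y)=\bigl(y^{n}x^{-(n-1)},\,y^{n+1}x^{-n}\bigr)$, and $f_{2,1}^{(n)}(x,y)=(x,y)$ forces $y^{n}=x^{n}$, i.e.\ $y=x$; every such point is already fixed, so the fixed-point set is exactly $\{(x,x):x>0\}$ and there are no points of minimal period $>1$.

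Next I would read off the asymptotics. The coefficient $\tfrac{n(n-1)}{2}\log\xi$ is quadratic in $n$ and hence dominates the linear terms as $n\to\infty$ whenever $\log\xi\neq 0$. So if $\xi<1$ the logarithm of each component tends to $-\infty$ and $f_{2,\xi}^{(n)}(x,y)\to(0,0)$ for every $(x,y)$, which gives item~1; if $\xi>1$ the logarithm of each component tends to $+\infty$ and both components of $f_{2,\xi}^{(n)}(P)$ diverge to $+\infty$ for every $P$, giving item~3. For $\xi=1$ the components of $f_{2,1}^{(n)}(x,y)$ are $x(y/x)^{n}$ and $y(y/x)^{n}$: when $y<x$ the factor $(y/x)^{n}\to 0$ and the orbit converges to $(0,0)$, while when $y>x$ the factor $(y/x)^{n}\to+\infty$ and both components diverge; together with the fixed points found above this is item~2.

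There is essentially no obstacle once the iterate formula is in hand: the argument is the elementary observation that a quadratic term beats a linear term in an exponent, plus the sign analysis in the three regimes $\xi<1$, $\xi=1$, $\xi>1$. The only point requiring mild care is the case $\xi=1$, where the quadratic term vanishes and the behaviour is instead governed by the sign of $\log(y/x)$; this is why the borderline case produces fixed points and a dichotomy rather than uniform convergence.
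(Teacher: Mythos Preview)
Your proof is correct and follows exactly the route the paper intends: the paper derives the closed form $f_{2,\xi}^{(n)}(x,y)=\xi^{n(n-1)/2}\,(y/x)^n\,(x,\xi^n y)$ and then simply asserts that ``the consequences for the dynamics of $f_{2,\xi}$ follow immediately from this expression,'' without spelling out the details. Your argument---ruling out periodic points via the ratio of the two components, and reading off the asymptotics from the quadratic-versus-linear growth of the exponents (with the borderline $\xi=1$ governed by the sign of $\log(y/x)$)---is precisely the verification the paper leaves implicit.
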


\begin{remark}
 The maps $f_k$ and $f_{2,\xi}$ in Theorem~\ref{FormasNormais} are reversible symplectic birational maps. In fact, the map  $R(x,y)=(y,x)$ is an involutory reversing symmetry of both $f_k$ and $f_{2,\xi}$, that is
$$R\circ h \circ R^{-1} = h^{-1}, \quad \text{for $h=f_k$ or $h=f_{2,\xi}$}.$$
with $R^2=Id$.  The consequences of this reversing symmetry to the  orbits of  $f_k$ and $f_{2,\xi}$ are clear from figures~\ref{fig2} and \ref{fig3}. 

Moreover as the restricted maps \eqref{Restr1} and \eqref{Restr4} are conjugate to one of these normal forms by a homeomorphism $\pi$, they also admit the involutory reversing symmetry $\pi^{-1}\circ R\circ \pi$.

 The existence of an involutory symmetry for a given map implies that the map can be written as a composition of two involutions \cite{LaRo}. This property plays a key role in the studies \cite{Duist} and \cite{IaRo} of the QRT maps \cite{QRT}, which are birational maps of the plane admitting a biquadratic  first integral.

 Reversing symmetries of maps have been widely studied and have strong consequences to the dynamics of the map. We refer to  \cite{LaRo} and references therein for a survey on the subject. For works on reversing symmetries related to the context of symplectic (birational) maps,  we refer for instance to  \cite{BaRo}, \cite{KoMe}, \cite{RoBa} and  \cite{Ro}.

\end{remark}

\section{ Dynamics of the family $\varphi_r$}\label{dynamics}

The dynamics of the family $\varphi_r$ given by \eqref{Family1}   is based on the dynamics of the restricted maps $\bar{\varphi}_r$ and $\widetilde{\varphi}_r$ in Proposition~\ref{Restrictions}. In turn, the dynamics of these restricted maps is easily obtained from the results in the previous section.

\medskip

\subsection{ Dynamics of the restricted maps}

In this subsection we combine the results in Corollary~\ref{cor1} with those in lemmas~\ref{lemafk} and \ref{lemaf2xi}, to describe the dynamics of the restricted maps $\bar{\varphi}_r$  and $\widetilde{\varphi}_r$  (below, in propositions~\ref{propbar} and \ref{proptilde}, respectively). 

For future reference, we  recall a basic  property of conjugate maps. If two maps $f$ and $g$ are conjugate by a homeomorphism $\pi$, that is $\pi \circ g = f\circ \pi$, then the dynamics of $g$ can be obtained from the dynamics of $f$ through the identity
\begin{equation}
\label{dyn}
g^{(n)} = \pi^{-1}\circ f^{(n)} \circ \pi, \quad n=1, 2, \ldots
\end{equation}

\begin{proposition}
\label{propbar} 
Let $r$ be a positive integer,   $\bar{\varphi}_r: \Rb_+^2 \rightarrow \Rb_+^2$ the map
$$\bar{\varphi}_r(x,y)=\left(y,2\frac{y^r}{x}\right),$$
 $\bar{\pi}_r$ the homeomorphism given by $\bar{\pi}_r(x,y) = 2^{\frac{1}{r-2}} \left(x,  y\right)$ with $r\neq 2$, and ${\cal F}_r, {\cal Z}_r\subset\Rb_+^2$ the sets defined by \eqref{setF} and \eqref{setZ}.
Then,
\begin{itemize}
\item[i)]  $\bar{\varphi}_1$ is globally 6-periodic, the point  $(2,2)$ is a fixed point of  $\bar{\varphi}_1$ and any other  point  $P\in\Rb_+^2$ is periodic with minimal period 6.
\item[ii)]  $\bar{\varphi}_2$ has no periodic points and for any $P\in\Rb_+^2$  each  component of $\bar{\varphi}_2^{(n)}(P)$ goes to $+\infty$.
\item[iii)] If $r>2$ the map $\bar{\varphi}_r$ has a unique fixed point $(2^{\frac{1}{2-r}}, 2^{\frac{1}{2-r}})$ and no other periodic points. 

If $P\in \bar{\pi}_r^{-1}({\cal F}_r)$ then the $\bar{\varphi}_r$-orbit of $P$  converges to the fixed point. 

If $P\in \bar{\pi}_r^{-1}({\cal Z}_r)$ then the $\bar{\varphi}_r$-orbit of $P$  converges to $(0,0)$. 

For any  other point $P\in\Rb_+^2$  each  component of $\bar\varphi_r^{(n)}(P)$ goes to $+\infty$. \label{BHV1}
\end{itemize}
\end{proposition}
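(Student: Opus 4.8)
The plan is to deduce everything from Corollary~\ref{cor1} together with lemmas~\ref{lemafk} and \ref{lemaf2xi}, using only the elementary transport-of-dynamics identity \eqref{dyn}. Since $\bar{\varphi}_r(x,y)=(y,2y^r/x)$, it satisfies the hypotheses of Theorem~\ref{FormasNormais} with $a+d=r$, so the case split $r=1$, $r=2$, $r>2$ corresponds exactly to whether $r<2$, $r=2$ or $r>2$; I will treat the three items in that order.

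First, for $r=1$: by Corollary~\ref{cor1}(2)(i) the conjugacy $\bar{\pi}_1(x,y)=2^{-1}(x,y)$ gives $\bar{\pi}_1\circ\bar{\varphi}_1=f_1\circ\bar{\pi}_1$, and Lemma~\ref{lemafk}(3) says $f_1$ is globally $6$-periodic with unique fixed point $(1,1)$ and all other points of minimal period $6$. Conjugacy preserves global periodicity and the minimal period of each point, and it sends the fixed point $(1,1)$ of $f_1$ to $\bar{\pi}_1^{-1}(1,1)=(2,2)$; this proves item i). For $r=2$, Corollary~\ref{cor1}(1)(i) says $\bar{\varphi}_2=f_{2,2}$ is already in normal form, and since $\xi=2>1$, Lemma~\ref{lemaf2xi}(3) gives directly that there are no periodic points and both components of every orbit tend to $+\infty$; this is item ii), with no conjugacy needed.

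For $r>2$, Corollary~\ref{cor1}(2)(i) again gives $\bar{\pi}_r\circ\bar{\varphi}_r=f_r\circ\bar{\pi}_r$ with $\bar{\pi}_r(x,y)=2^{1/(r-2)}(x,y)$, and here $k=r>2$, so Lemma~\ref{lemafk}(4) applies to $f_r$: the only fixed point is $(1,1)$, there are no other periodic points, and the convergence behaviour of $f_r$-orbits is governed by the sets $\mathcal{F}_r$, $\mathcal{Z}_r$ of \eqref{setF}, \eqref{setZ}. By \eqref{dyn}, $\bar{\varphi}_r^{(n)}=\bar{\pi}_r^{-1}\circ f_r^{(n)}\circ\bar{\pi}_r$; since $\bar{\pi}_r$ is a scaling by a positive constant, it is a homeomorphism of $\Rb_+^2$ that fixes $(0,0)$ as a limit and maps $(1,1)$ to the point $\bigl(2^{1/(2-r)},2^{1/(2-r)}\bigr)$, which is therefore the unique fixed point of $\bar{\varphi}_r$ with no other periodic points. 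An orbit of $\bar{\varphi}_r$ starting at $P$ converges to the fixed point, to $(0,0)$, or has both components going to $+\infty$ according to whether $\bar{\pi}_r(P)$ lies in $\mathcal{F}_r$, in $\mathcal{Z}_r$, or elsewhere; equivalently, according to whether $P$ lies in $\bar{\pi}_r^{-1}(\mathcal{F}_r)$, $\bar{\pi}_r^{-1}(\mathcal{Z}_r)$, or elsewhere. This gives item iii) verbatim.

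There is essentially no obstacle here: the work was done in Corollary~\ref{cor1} and the two lemmas, and the proposition is just the unwinding of \eqref{dyn}. The only point requiring a word of care is the behaviour at the boundary/at infinity under the conjugacy $\bar{\pi}_r$ — one should note explicitly that a positive homothety of $\Rb_+^2$ extends continuously to the one-point-at-infinity and to the origin, so ``converges to $(0,0)$'' and ``each component goes to $+\infty$'' are preserved — but this is routine. I would also remark that item i) can be checked independently by direct computation of $\bar{\varphi}_1^{(6)}=Id$, which provides a useful sanity check.
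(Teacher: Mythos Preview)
Your proof is correct and follows essentially the same route as the paper's own proof: reduce via Corollary~\ref{cor1} to the normal forms $f_r$ (for $r\neq 2$) and $f_{2,2}$ (for $r=2$), then invoke Lemma~\ref{lemafk} and Lemma~\ref{lemaf2xi} together with the transport identity \eqref{dyn}. Your extra remark about the positive homothety preserving convergence to $(0,0)$ and divergence to $+\infty$ is a welcome clarification that the paper leaves implicit.
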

\begin{proof} Statement $ii)$ just  follows from Lemma~\ref{lemaf2xi} with $\xi=2$. 

For the other statements, note that  by Corollary~\ref{cor1} the map $\bar{\varphi}_r$ is conjugate to the map $f_r(x,y)= (y,y^r/x)$ by the conjugacy $\bar{\pi}_r$.  The  conclusions then follow from Lemma~\ref{lemafk} and  from the identity \eqref{dyn} by taking into account that 
$\bar{\pi}_r^{-1}(x,y) = 2^{\frac{1}{2-r}}\left( x, y\right)$.
\end{proof}

\begin{proposition}
\label{proptilde} 
Let $r$ be a positive integer  and  $\widetilde{\varphi}_r: \Rb_+^2 \rightarrow \Rb_+^2$   the map
$$\widetilde{\varphi}_r(x,y)=\left(\lambda \frac{y^{r^2-2}}{x},\lambda^{r^2-1}\frac{y^{ (r^2-3)(r^2-1)}}{x^{r^2-2}}\right),$$ 
where 
$$\lambda=\frac{(1+p^r)^2 (1+q^r)^r}{q^2 p^r}, \quad (p,q)\in \Rb_+^2\backslash\{(1,1)\}.$$
Consider  the homeomorphism  $\widetilde{\pi}_r(x,y)  =\lambda^{\frac{1}{r^2-4}} \left(\lambda \frac{x^{r^2-2}}{y}, y\right)$  with $r\neq 2$, and ${\cal F}_r, {\cal Z}_r\subset\Rb_+^2$  the sets  defined by \eqref{setF} and \eqref{setZ}.
Then,
\begin{itemize}
\item[i)]  $\widetilde{\varphi}_1$ 
 is globally 3-periodic,  the point  $(\lambda^{1/3},\lambda^{1/3})$ is its unique fixed point   and any other point has minimal period 3.
 \item[ii)]  $\widetilde{\varphi}_2$ has no periodic points and both components of $\widetilde{\varphi}_2^{(n)}(P)$ go to $+\infty$ for any $P\in\Rb_+^2$.
\item[iii)] For $r>2$, the map $\widetilde{\varphi}_r$ has a unique fixed point $\left(\lambda^{\frac{1}{4-r^2}}, \lambda^{\frac{1}{4-r^2}}\right)$ and no other periodic points. 

If $P\in \widetilde{\pi}_r^{-1}({\cal F}_{(r^2-2)^2-2})$ the $\widetilde{\varphi}_r$-orbit of $P$ converges to the fixed point. 

If $P\in \widetilde{\pi}_r^{-1}({\cal Z}_{(r^2-2)^2-2})$ then the $\widetilde{\varphi}_r$-orbit of $P$ converges to $(0,0)$. 

For any other point $P$,  both components of $\widetilde\varphi_r^{(n)}(P)$ go to $+\infty$.
\end{itemize} 
\end{proposition}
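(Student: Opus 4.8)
The plan is to proceed exactly as in the proof of Proposition~\ref{propbar}, transferring the dynamics of the normal forms $f_k$ and $f_{2,\xi}$ to $\widetilde{\varphi}_r$ via the explicit conjugacies recorded in Corollary~\ref{cor1}. The key arithmetic input is that for $\widetilde{\varphi}_r$ one has $a+d = (r^2-2)^2-2$, and this equals $2$ precisely when $r=2$; for $r\neq 2$ the trace is $k:=(r^2-2)^2-2$, which for $r=1$ gives $k=-1$ and for $r>2$ gives $k>2$ (indeed $k\geq 47$). So each of the three cases of the proposition lands in a different branch of the lemmas.

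For item $i)$ ($r=1$): here $\widetilde{\varphi}_1$ is conjugate to $f_{-1}$ by $\widetilde{\pi}_1$, and $f_{-1}$ is globally $3$-periodic by Lemma~\ref{lemafk}(1). Since global periodicity, the period, and the fixed-point structure are all conjugacy invariants, $\widetilde{\varphi}_1$ is globally $3$-periodic with all non-fixed points of minimal period $3$. The unique fixed point is $\widetilde{\pi}_1^{-1}(1,1)$, which I would check equals $(\lambda^{1/3},\lambda^{1/3})$ by a direct computation with the explicit formula for $\widetilde{\pi}_1$ from \eqref{eq:tildefiproj} (specialised to $r=1$) and its inverse. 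For item $ii)$ ($r=2$): one cannot use the $f_k$ normal form, but by Corollary~\ref{cor1}(1)(ii) the map $\widetilde{\varphi}_2$ is conjugate to $f_{2,\lambda^4}$ via $\widetilde{\pi}_2$, and since $(p,q)\neq(1,1)$ forces $\lambda>1$ (this positivity/strict-inequality fact about $\lambda$, namely that $\lambda=\frac{(1+p^r)^2(1+q^r)^r}{q^2p^r}$ attains its minimum value on $\Rb_+^2$ only at $(p,q)=(1,1)$, is the one small lemma I would need to isolate — for $r=2$ it follows from $\frac{(1+t^2)^2}{t^2}\geq 4$ with equality iff $t=1$), Lemma~\ref{lemaf2xi}(3) applies: no periodic points and both components of the iterates tend to $+\infty$; this property is preserved under the conjugacy $\widetilde{\pi}_2$ since it is a homeomorphism of $\Rb_+^2$ that is monotone in each variable and fixes the ``corners'' $0$ and $+\infty$ in the appropriate sense.

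For item $iii)$ ($r>2$): by Corollary~\ref{cor1}(2)(ii), $\widetilde{\varphi}_r$ is conjugate to $f_{(r^2-2)^2-2}$ by $\widetilde{\pi}_r$, and $(r^2-2)^2-2>2$, so Lemma~\ref{lemafk}(4) gives a unique fixed point, no other periodic points, and the trichotomy of asymptotic behaviours governed by the sets ${\cal F}_k$ and ${\cal Z}_k$ with $k=(r^2-2)^2-2$. Using \eqref{dyn}, the $\widetilde{\varphi}_r$-orbit of $P$ converges to the fixed point iff $\widetilde{\pi}_r(P)\in{\cal F}_k$, i.e. iff $P\in\widetilde{\pi}_r^{-1}({\cal F}_k)$, and likewise for convergence to $(0,0)$ via $\widetilde{\pi}_r^{-1}({\cal Z}_k)$; here I use that $\widetilde{\pi}_r$ and $\widetilde{\pi}_r^{-1}$ are homeomorphisms of $\Rb_+^2$ extending continuously to send $(0,0)\mapsto(0,0)$ and ``$\infty\mapsto\infty$'', so that the three limit behaviours are genuinely transported. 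Finally the fixed point is $\widetilde{\pi}_r^{-1}(1,1)$, which I would verify equals $\left(\lambda^{\frac{1}{4-r^2}},\lambda^{\frac{1}{4-r^2}}\right)$ by inverting the explicit $\widetilde{\pi}_r$ in \eqref{eq:tildefiproj} and evaluating at $(1,1)$.

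The only real obstacle is bookkeeping rather than conceptual: one must (a) pin down the explicit inverse $\widetilde{\pi}_r^{-1}$ and confirm the stated coordinates of the fixed points, and (b) argue carefully that the conjugating homeomorphism $\widetilde{\pi}_r$ respects the ``limits at the boundary of $\Rb_+^2$'' so that convergence to $(1,1)$, to $(0,0)$, and divergence to $+\infty$ each pull back correctly — this is where one should be slightly careful, since $\widetilde{\pi}_r$ is a monomial-type map and one should note it is a homeomorphism of the one-point-compactified picture in the relevant directions. The rest is a direct invocation of Corollary~\ref{cor1}, Lemma~\ref{lemafk}, Lemma~\ref{lemaf2xi} and the elementary inequality that characterises when $\lambda$ equals its minimum.
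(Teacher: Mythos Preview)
Your proposal is correct and follows essentially the same approach as the paper's own proof: invoke Corollary~\ref{cor1} to conjugate $\widetilde{\varphi}_r$ to the appropriate normal form ($f_{-1}$, $f_{2,\lambda^4}$, or $f_{(r^2-2)^2-2}$), then apply Lemma~\ref{lemafk} or Lemma~\ref{lemaf2xi}, transporting conclusions back via \eqref{dyn}. The paper handles the bookkeeping you flag by simply writing down the explicit inverse $\widetilde{\pi}_r^{-1}(x,y)=\lambda^{\frac{1}{4-r^2}}\bigl((xy)^{\frac{1}{r^2-2}},y\bigr)$ (and $\widetilde{\pi}_2^{-1}(x,y)=(\sqrt{\lambda xy},\lambda y)$), from which both the fixed-point coordinates and the preservation of the three asymptotic behaviours are immediate; and for $r=2$ it observes directly that $\lambda=(p+p^{-1})^2(q+q^{-1})^2>1$.
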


\begin{proof}  By Corollary~\ref{cor1}-{\it 1.ii)}, the map $\widetilde{\varphi}_2$ is conjugate to the map $f_{2,\lambda^4}$ by the conjugacy $\widetilde \pi_2$  in \eqref{eq:tildefiproj1}. Note that in this case  $\lambda =\left(p+\frac{1}{p}\right )^2\left (q+\frac{1}{q}\right)^2>1$. Using \eqref{dyn} and noting that 
$\widetilde{\pi}_2^{-1}(x,y) = ( \sqrt{\lambda x y}, \lambda y)$, statement $ii)$  follows from Lemma~\ref{lemaf2xi}-{\it 3} with $\xi=\lambda^4$.  

If $r\neq 2$ then  by Corollary~\ref{cor1}-{\it 2.ii)} the map $\widetilde{\varphi}_r$ is conjugate to the map $f_{(r^2-2)^2-2}$ in  \eqref{eq:tildefiproj} by the conjugacy $\widetilde{\pi}_r$.

For statement $i)$, $f_{(r^2-2)^2-2}= f_{-1}$ and so the conclusion follows from Lemma~\ref{lemafk}-{\it 1}. 

Finally,  if $r>2$ then $(r^2-2)^2-2>2$ and so the last statement  follows from Lemma~\ref{lemafk}-{\it 4} using again the identity \eqref{dyn} and  the fact that $\widetilde{\pi}_r^{-1}(x,y) = \lambda^{\frac{1}{4-r^2}}\left( (xy)^{\frac{1}{r^2-2}}, y\right).$ 
\end{proof}

\subsection{Dynamics of $\varphi_r$}

We  now address the problem of describing the dynamics of the maps of the family \eqref{Family1}. Recall that the maps of this family are given by 
$$\varphi_r(x_1,x_2,x_3,x_4)=  \left(x_3,x_4,\frac{x_2^r+x_3^r}{x_1}, \frac{x_1^rx_4^r+ (x_2^r+x_3^r)^r}{x_1^rx_2}\right),$$
where the parameter $r$ is a positive integer. 

By Proposition~\ref{orbits}, each orbit of $\varphi_r$  is either entirely contained in the 2-dimensional algebraic variety $C^r_{(1,1)}$ or circulates between four pairwise disjoint 2-dimensional algebraic varieties: 
$$C^r_{(p,q)}, \quad C^r_{(q,p^{-1})}, \quad C^r_{(p^{-1}, q^{-1})}, \quad C^r_{(q^{-1}, p)}$$
 which are all invariant under $\varphi_r^{(4)}$. The dynamics of the restrictions of $\varphi_r$ to
$C^r_{(1,1)}$ and of $\varphi_r^{(4)}$ to $C^r_{P}$ with $P\neq (1,1)$ were obtained in Proposition~\ref{propbar} and Proposition~\ref{proptilde} respectively. 
 From the results in  those propositions  we  are able to describe the dynamics of the maps $\varphi_r$ and conclude that there are three different types of dynamical behaviour  depending on whether $r=1$, $r=2$ or  $r>2$. 
 
The dynamics of $\varphi_r$ in  the cases $r=1$ and $r>2$ is described  respectively in Theorem~\ref{tr=1} and Theorem~\ref{tr>2} below. 
The case $r=2$ will not be considered in what follows, since it corresponds to one of the maps whose dynamics was described in detail in  \cite[Theorem 3]{InHeEs}.  For the sake of completeness we sum up the results from that reference: $\varphi_2$ has no periodic points and for any $ \mathbf{x}\in\Rb_+^4$  each  component of $\varphi_2^{(n)}(\mathbf{x})$ goes to $+\infty$. The explicit expression of $\varphi_2^{(n)}$ was also obtained in the  referred work.

\begin{theorem}\label{tr=1}
The map $\varphi_1:\Rb^4_+\rightarrow \Rb^4_+$  given by
$$\varphi_1(x_1,x_2,x_3,x_4)=  \left(x_3,x_4,\frac{x_2+x_3}{x_1}, \frac{x_1x_4+ x_2+x_3}{x_1x_2}\right)$$
is globally 12-periodic. Moreover, 
\begin{enumerate}
\item $\varphi_1$ has exactly one fixed point, the point $F=(2,2,2,2)$.
\item Every point in the  (punctured) algebraic variety $V\backslash\{F\}$  where
$$V=\{(x_1,x_2,x_3,x_4)\in\Rb_+^4: x_4=x_1,\,\, x_1x_2x_3=x_1^2+x_2+x_3\},$$
is periodic with minimal period 4. 
\item Every point in the  (punctured) algebraic variety $C^1_{(1,1)}\backslash\{F\}$  where
$$C^1_{(1,1)}=\{(x_1,x_2,x_3,x_4)\in\Rb_+^4: \,\,x_3=x_2,\, \,  x_1x_4=2x_2\},$$
is periodic with minimal period 6.
\end{enumerate}
Any other point has minimal period 12.
\end{theorem}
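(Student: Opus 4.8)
The plan is to deduce everything from Proposition~\ref{orbits} together with the dynamics of the restricted maps established in propositions~\ref{propbar} and \ref{proptilde} specialised to $r=1$. First I would note that for $r=1$ the relevant exponent in $\widetilde{\varphi}_r$ is $(r^2-2)^2-2 = -1$, so that the restricted fourth-iterate map $\widetilde{\varphi}_1$ is conjugate to $f_{-1}$, which by Lemma~\ref{lemafk}-{\it 1} is globally $3$-periodic; hence $\varphi_1^{(4)}$ restricted to every $C^1_{(p,q)}$ is $3$-periodic, giving $\varphi_1^{(12)}\restr{C^1_{(p,q)}}=Id$ for all $(p,q)\neq(1,1)$. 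On $C^1_{(1,1)}$ the restriction $\bar\varphi_1$ is by Proposition~\ref{propbar}-{\it i)} globally $6$-periodic, so $\varphi_1^{(12)}\restr{C^1_{(1,1)}}=Id$ as well. Since by Proposition~\ref{orbits} the sets $C^1_{(p,q)}$, $(p,q)\in\Rb_+^2$, partition $\Rb_+^4$, this proves $\varphi_1^{(12)}=Id$, i.e. global $12$-periodicity.

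Next I would pin down the periodic points by their minimal period. For a point $\mathbf{x}\in C^1_{(p,q)}$ with $(p,q)\neq(1,1)$, Proposition~\ref{orbits} says the orbit visits four pairwise disjoint sets cyclically, so the minimal period is a multiple of $4$; combined with global $12$-periodicity it is either $4$ or $12$. It equals $4$ exactly when $\varphi_1^{(4)}(\mathbf{x})=\mathbf{x}$, i.e. when $\mathbf{x}$ is a fixed point of the restricted map $\widetilde{\varphi}_1$ on its $C^1_{(p,q)}$. By Proposition~\ref{proptilde}-{\it i)} (transported back by the conjugacy $\widetilde{\pi}_1$) each $C^1_{(p,q)}$ contains exactly one such fixed point. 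I would then assemble the union of these fixed points over all $(p,q)\neq(1,1)$ and show it is the punctured variety $V\setminus\{F\}$: using the coordinates $(x_1,x_4)$ on $C^1_{(p,q)}$ and the explicit form of $\widetilde{\varphi}_1$ from Proposition~\ref{Restrictions} with $r=1$, the fixed-point equations reduce to $x_4=x_1$ together with the relation $q x_1 x_4 = (1+p)x_2$ and $x_3=px_2$ defining $C^1_{(p,q)}$; eliminating $p,q$ and using $\lambda = (1+p)^2(1+q)/(q^2 p)$ one recovers $x_1 x_2 x_3 = x_1^2 + x_2 + x_3$. Conversely any point of $V$ lies in some $C^1_{(p,q)}$ (read off $p=x_3/x_2$, $q=(x_2+x_3)/(x_1x_4)$) and satisfies the fixed-point equation, so it has period dividing $4$; it has period exactly $4$ unless it is the global fixed point $F$.

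For $C^1_{(1,1)}$, which is $\varphi_1$-invariant, the restriction $\bar\varphi_1$ is globally $6$-periodic with the single fixed point $(2,2)$ in coordinates $(x_1,x_2)$ (Proposition~\ref{propbar}-{\it i)}); pulling back through the parametrisation $x_3=x_2$, $x_1x_4=2x_2$ this gives that every point of $C^1_{(1,1)}\setminus\{F\}$ has minimal period $6$ and that $F=(2,2,2,2)$ is the unique fixed point of $\varphi_1$ (it is the only fixed point on $C^1_{(1,1)}$, and a fixed point cannot lie on any $C^1_{(p,q)}$ with $(p,q)\neq(1,1)$ since those orbits have period a multiple of $4$). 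Finally, any point not on $V$ and not on $C^1_{(1,1)}$ lies in some $C^1_{(p,q)}$, $(p,q)\neq(1,1)$, and is not the $\widetilde{\varphi}_1$-fixed point there, hence has minimal period $12$.

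The main obstacle I anticipate is the bookkeeping in step two: correctly eliminating the parameters $(p,q)$ from the fixed-point equations of $\widetilde{\varphi}_1$ restricted to $C^1_{(p,q)}$ and verifying that the resulting locus is precisely $V$, including checking the edge behaviour at $F$ (where $V$, $C^1_{(1,1)}$ and the fixed point all meet) and making sure the period-$4$ and period-$6$ loci intersect only in $F$. One must also be slightly careful that minimal period $4$ is genuinely achieved (not secretly $2$ or $1$), which is guaranteed precisely because $\psi$ has $(1,1)$ as its only point of period $<4$, so for $(p,q)\neq(1,1)$ the four sets in the cycle really are distinct.
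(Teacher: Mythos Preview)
Your proposal is correct and follows essentially the same route as the paper: reduce via Proposition~\ref{orbits} to the restrictions $\bar\varphi_1$ and $\widetilde\varphi_1$, invoke Propositions~\ref{propbar}-{\it i)} and~\ref{proptilde}-{\it i)} for their global $6$- and $3$-periodicity, and then sort out minimal periods using the cyclic circulation among the four disjoint $C^1_{\psi^{(i)}(p,q)}$. The only minor difference is in deriving $V$: rather than eliminating $p,q$ from $\lambda=(1+p)^2(1+q)/(q^2p)$, the paper substitutes $\lambda$ by the value of the function $l(\mathbf{x})$ from \eqref{Flambda} restricted to $C^1_{(p,q)}$, which for $r=1$ gives directly $\lambda = x_1x_4(x_1x_4+x_2+x_3)/(x_2x_3)$ and, combined with $x_1=x_4=\lambda^{1/3}$, yields $x_1x_2x_3=x_1^2+x_2+x_3$ in one line.
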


\begin{proof}
By Proposition~\ref{orbits}, the $\varphi_1$-orbit of any point is either entirely contained in the algebraic variety $C^1_{(1,1)}$ or moves between four pairwise disjoint algebraic varieties
$$C^1_{(p,q)}, \quad C^1_{(q,p^{-1})}, \quad C^1_{(p^{-1},q^{-1})}, \quad C^1_{(q^{-1},p)}$$
with  $(p,q)\in  \Rb^2_+\backslash \{(1,1)\}$.  

The restriction $\bar\varphi_1$ of $\varphi_1$ to $C^1_{(1,1)}$ is globally 6-periodic by Proposition~\ref{propbar}, and the restriction $\widetilde\varphi_1$ of $\varphi_1^{(4)}$ to any $C^1_{(p,q)}$ is globally 3-periodic by Proposition~\ref{proptilde}. Hence $\varphi_1$ is globally 12-periodic. 
Moreover, all the points in $C^1_{(1,1)}$ have minimal period 6 except the point $F=(2,2,2,2)$ which is fixed. 

Any point not belonging to $C^1_{(1,1)}$ is either a fixed point of $\varphi_1^{(4)}$ or a periodic point of $\varphi_1^{(4)}$ with minimal period 3. 

To compute the fixed points of $\varphi_1^{(4)}$, which correspond to periodic points of $\varphi_1$ with minimal period 4, we refer again to Proposition~\ref{proptilde}-{\it i)}. Each of these points $\mathbf{x}$ belongs to a set $C^1_{(p,q)}$ with $(p,q)\neq (1,1)$  and its coordinates $(x_1,x_4)$ satisfy $x_1=x_4=\lambda^{1/3}$, where $\lambda$ is given by \eqref{lambda} (with $r=1$). On the other hand the constant $\lambda$ is the value of the restriction to $C^1_{(p,q)}$ of the function $l(\mathbf{x})$ given in \eqref{Flambda}. To obtain the set $V$ it is enough to eliminate $\lambda$ from these relations, that is from 
$$x_1=x_4=\lambda^{1/3}, \quad \lambda=\frac{x_1 x_4+x_2+x_3}{x_1^{-1}x_2x_3x_4^{-1}}.
$$

Finally the remaining points are periodic points of $\varphi_1^{(4)}$ with minimal period 3, and therefore they are periodic points of $\varphi_1$ with minimal period 12. 
\end{proof}

\begin{theorem}\label{tr>2}
 For each integer $r>2$,  let  $\varphi_r:\Rb^4_+\rightarrow \Rb^4_+$ be the map
$$\varphi_r(x_1,x_2,x_3,x_4)=  \left(x_3,x_4,\frac{x_2^r+x_3^r}{x_1}, \frac{x_1^rx_4^r+ (x_2^r+x_3^r)^r}{x_1^rx_2}\right)$$
and  $\pi_r:\Rb^4_+\rightarrow \Rb^2_+$ the map
$\pi_r(\mathbf{x}) =\left(\frac{x_3}{x_2},\frac{x_2^r+x_3^r}{x_1x_4}\right)$. 
Then,
\begin{enumerate}
\item $\varphi_r$ has a unique fixed point, the point $F=(2^{\frac{1}{2-r}},2^{\frac{1}{2-r}},2^{\frac{1}{2-r}},2^{\frac{1}{2-r}})$. 
\item Any point  in the (punctured) algebraic variety $V\backslash\{F\}$ where
$$V=\left\{(x_1,x_2,x_3,x_4)\in\Rb_+^4: \, x_4=x_1, \,\, x_1^rx_2x_3=x_1^{2r}+\left(x_2^r+x_3^r\right)^r\right\},$$
is a periodic point of $\varphi_r$ with minimal period 4.

\item Any  point $\mathbf{x}\notin V$ is non-periodic and

\begin{enumerate}
\item if $\mathbf{x}$ belongs to the $\varphi_r$-invariant set
$$C^r_{(1,1)}=\{(x_1,x_2,x_3,x_4)\in\Rb_+^4: x_3=x_2,\,\, x_1x_4=2x_2^r\},$$
 the $\varphi_r$-orbit of  $\mathbf{x}$ is entirely contained in the curve:
 $$L_\mathbf{x}=\{{\bf z}\in C^r_{(1,1)}: J({\bf z})=J(\mathbf{x})\},$$
where $J({\bf z})=\log^2(z_1)-r\log(z_1)\log(z_2)+\log^2(z_2)-\log(2)\log(z_1z_2)$. 

Moreover, the orbit of $\mathbf{x}$ either converges to the fixed point $F$ or converges to $(0,0,0,0)$ or else every component of $\varphi_r^{(n)}(\mathbf{x})$ goes to $+\infty$.

\item if $\mathbf{x}$ does not belong to $C^r_{(1,1)}$ then its $\varphi_r$-orbit circulates  between  the four pairwise disjoint curves ${\cal L}_{\bf{x}}$, ${\cal L}_{\varphi_r(\bf{x})}$, ${\cal L}_{\varphi_r^{(2)}(\bf{x})}$ and ${\cal L}_{\varphi_r^{(3)}(\bf{x})}$  where
$${\cal L}_{\bf y}=\{{\bf z}\in \Rb_+^4: \pi_r({\bf z})=\pi_r({\bf y}), \,\, J({\bf z})=J({\bf y})\},$$
with 
$$J({\bf z})=\log^2(z_1z_4)-r^2\log (z_1)\log (z_4)-\log(l({\bf z}))\log (z_1z_4),$$
 and $l$ the function defined in  \eqref{Flambda}.

Moreover, the orbit of $\mathbf{x}$ either converges to the orbit of a 4-periodic point, or converges to $(0,0,0,0)$ or else every component of $\varphi_r^{(n)}(\mathbf{x})$ goes to $+\infty$. 
\end{enumerate}
\end{enumerate}
\end{theorem}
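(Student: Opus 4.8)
The plan is to deduce the whole statement from the leaf decomposition of Proposition~\ref{orbits} together with the dynamics of the restricted maps $\bar\varphi_r$ and $\widetilde\varphi_r$ established in Propositions~\ref{propbar} and \ref{proptilde}. Recall that every $\mathbf{x}\in\Rb_+^4$ either lies on $C^r_{(1,1)}$, where $\varphi_r$ restricts to $\bar\varphi_r$ in the coordinates $(x_1,x_2)$, or lies on a leaf $C^r_{(p,q)}$ with $(p,q)\neq(1,1)$, in which case its $\varphi_r$-orbit runs cyclically through the four disjoint leaves $C^r_{(p,q)},C^r_{(q,1/p)},C^r_{(1/p,1/q)},C^r_{(1/q,p)}$, on each of which $\varphi_r^{(4)}$ restricts to $\widetilde\varphi_r$ in the coordinates $(x_1,x_4)$. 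Since $(1,1)$ is the only periodic point of $\psi$ of period $<4$, a periodic point of $\varphi_r$ either sits on $C^r_{(1,1)}$ and is then periodic for $\bar\varphi_r$, or sits on a leaf with $(p,q)\neq(1,1)$ and is then periodic for $\widetilde\varphi_r$; and since $r>2$ makes $a+d>2$ in the sense of Corollary~\ref{cor1}, Propositions~\ref{propbar}(iii) and \ref{proptilde}(iii) say that each of $\bar\varphi_r$, $\widetilde\varphi_r$ has exactly one periodic point, namely its fixed point. So the periodic set of $\varphi_r$ is the fixed point of $\bar\varphi_r$ together with the fixed points of $\widetilde\varphi_r$ over all leaves with $(p,q)\neq(1,1)$.

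Part 1 then follows: a fixed point of $\varphi_r$ projects under $\pi_r$ to a fixed point of $\psi$, hence to $(1,1)$, so it lies on $C^r_{(1,1)}$ and coincides with the fixed point $(2^{1/(2-r)},2^{1/(2-r)})$ of $\bar\varphi_r$; recovering $x_3=x_2$ and $x_4=2x_2^r/x_1$ from \eqref{Crpq} gives $F$. For part 2, a fixed point of $\varphi_r^{(4)}$ off $C^r_{(1,1)}$ is a fixed point of $\widetilde\varphi_r$, so by Proposition~\ref{proptilde}(iii) it has $x_1=x_4=\lambda^{1/(4-r^2)}$ with $\lambda=l(\mathbf{x})$ the constant value of \eqref{Flambda} on the leaf; this is equivalent to $x_4=x_1$ and $l(\mathbf{x})=x_1^{4-r^2}$, and substituting \eqref{Flambda} and taking positive $r$-th roots reduces it to $x_1^rx_2x_3=x_1^{2r}+(x_2^r+x_3^r)^r$, that is, to the defining relations of $V$. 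Since the only fixed point of $\varphi_r^{(4)}$ on $C^r_{(1,1)}$ is $F$, and a direct check gives $F\in V$ and $V\cap C^r_{(1,1)}=\{F\}$ for $r>2$, the fixed-point set of $\varphi_r^{(4)}$ is exactly $V$; every point of $V\setminus\{F\}$ is thus fixed by $\varphi_r^{(4)}$ but not by $\varphi_r$, and $\varphi_r$ has no point of period $2$ (such a point would project to $(1,1)$ and be a period-$2$ point of $\bar\varphi_r$, which does not exist for $r>2$), so, its minimal period dividing $4$, it equals $4$.

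For part 3 I would first produce the first integrals. Because $\bar\varphi_r$ is conjugate to $f_r$ through $\bar\pi_r$ and $\widetilde\varphi_r$ to $f_{(r^2-2)^2-2}$ through $\widetilde\pi_r$ (Corollary~\ref{cor1}), composing \eqref{int} with these conjugacies yields first integrals which, up to an irrelevant additive constant, are precisely the functions $J$ of the theorem restricted to $C^r_{(1,1)}$ and to a leaf $C^r_{(p,q)}$ respectively; on a leaf $l(\mathbf{z})$ is constant and one finds $I_{(r^2-2)^2-2}\circ\widetilde\pi_r=(r^2-2)^2 J+\text{const}$ with positive leading factor. Hence an orbit in $C^r_{(1,1)}$ stays on $L_\mathbf{x}$, and a $\varphi_r^{(4)}$-orbit on a leaf stays on the relevant level curve, so the $\varphi_r$-orbit off $C^r_{(1,1)}$ cycles through the four curves $\mathcal{L}_\mathbf{x},\mathcal{L}_{\varphi_r(\mathbf{x})},\mathcal{L}_{\varphi_r^{(2)}(\mathbf{x})},\mathcal{L}_{\varphi_r^{(3)}(\mathbf{x})}$. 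The asymptotics on $C^r_{(1,1)}$ come from Proposition~\ref{propbar}(iii): its three regimes for $(x_1,x_2)$ transfer verbatim to the four coordinates using $x_3^{(n)}=x_2^{(n)}$ and the identity $x_4^{(n)}=x_2^{(n+1)}$ read off from $\bar\varphi_r$, giving convergence to $F$, to $(0,0,0,0)$, or divergence of every component. Off $C^r_{(1,1)}$ I would apply Proposition~\ref{proptilde}(iii) leaf by leaf: the ``converges to the fixed point'' regime yields convergence of the $\varphi_r^{(4)}$-subsequence to the $4$-periodic point on that leaf, and since that point is interior, continuity of $\varphi_r$ propagates this to convergence of the full orbit to the $4$-cycle; the other two regimes correspond to escape along the unstable direction of the hyperbolic linear model $g_{(r^2-2)^2-2}$ of \eqref{gk}, which pins the asymptotic ratio between $\log x_1$ and $\log x_4$ on each leaf, and a computation with \eqref{Crpq} and \eqref{Family1} then shows all four components of $\varphi_r^{(n)}(\mathbf{x})$ tend to $0$, respectively to $+\infty$, consistently along the four leaves.

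The delicate step is this last one: the dynamics of $\widetilde\varphi_r$ in Proposition~\ref{proptilde} is phrased in the planar leaf coordinates $(x_1,x_4)$, and passing from there to the behaviour of the full four-dimensional orbit — and checking that the three regimes match up along the four leaves — requires controlling the fourth coordinate of $\varphi_r$ near the boundary, for which one needs the precise escape rate determined by the dominant eigenvalue of $g_{(r^2-2)^2-2}$; the remaining work (eliminating $\lambda$ in part 2, pinning down the constants in $J$, and reconstructing $x_2,x_3$ from $x_1,x_4$ on each leaf via \eqref{Crpq}) is routine bookkeeping.
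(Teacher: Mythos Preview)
Your argument is correct and follows essentially the same route as the paper's proof: the leaf decomposition from Proposition~\ref{orbits}, the restricted dynamics from Propositions~\ref{propbar} and~\ref{proptilde}, elimination of $\lambda$ via \eqref{Flambda} to obtain $V$, and the first integrals $J$ obtained by composing $I_k$ with the conjugacies of Corollary~\ref{cor1}. The one place you are more explicit than the paper is the final cross-leaf step in 3(b): the paper simply asserts that ``using the expression of the map $\varphi_r$'' the three asymptotic regimes propagate consistently from one leaf to the next, whereas you (rightly) flag that the fourth coordinate involves a competition of powers and propose to resolve it with the dominant eigenvalue of $g_{(r^2-2)^2-2}$; both lead to the same conclusion.
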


\begin{proof}  The proof follows the same lines as the proof of Theorem~\ref{tr=1} substituting for each  $(p,q)\in  \Rb^2_+$ the set $C^1_{(p,q)}$ by  
$$C^r_{(p,q)}=\left\{(x_1,x_2,x_3,x_4)\in\Rb_+^4:\, x_3=p x_2,\,\, q x_1x_4= (1+ p^r) x_2^r\right\}.$$

As  $C^r_{(1,1)}$ is invariant under $\varphi_r$ and any $C^r_{(p,q)}$ is invariant under $\varphi_r^{(4)}$  (cf. Proposition~\ref{orbits}), the orbit of any point $\mathbf{x}\in\Rb^4_+$ can be studied by looking at the restriction $\bar\varphi_r$ of $\varphi_r$ to $C^r_{(1,1)}$ and at the restrictions $\widetilde\varphi_r$ of $\varphi_r^{(4)}$ to each $C^r_{(p,q)}$. The restriction $\bar\varphi_r$ is given in the coordinates $(x_1,x_2)$ by \eqref{Restr1} and  $\widetilde\varphi_r$ is given in the coordinates $(x_1,x_4)$ by \eqref{Restr4}.

By Proposition~\ref{propbar}-{\it iii)}, $(x_1,x_2)=(2^{\frac{1}{2-r}},2^{\frac{1}{2-r}})$ is the unique fixed point of the map $\bar\varphi_r$  corresponding to the fixed point $F=(2^{\frac{1}{2-r}},2^{\frac{1}{2-r}},2^{\frac{1}{2-r}},2^{\frac{1}{2-r}})$ of $\varphi_r$. 

By Proposition~\ref{proptilde}-{\it iii)},  for each $(p,q)\neq (1,1)$ the restriction  $\widetilde{\varphi}_r$ of $\varphi_r^{(4)}$ to $C^r_{(p,q)}$  also has a unique fixed point:  $(x_1,x_4)=(\lambda^{\frac{1}{4-r^2}},\lambda^{\frac{1}{4-r^2}})$. Each of these fixed points is a periodic point of $\varphi_r$ with minimal period 4. The full set of these 4-periodic points is  a (punctured) 2-dimensional variety $V\backslash\{F\}$.  Like in the proof of the previous theorem,  the explicit form of $V$ is easily obtained from the fact that the fixed point  of $\widetilde{\varphi}_r$ satisfies $x_1=x_4= \lambda^{\frac{1}{4-r^2}}$ and from the fact that $\lambda$ is the value of the restriction to $C^r_{(p,q)}$ of the function $l(\mathbf{x})$ given in \eqref{Flambda}. This completes the proof of statements  {\it 1.} and {\it 2.}

As $\bar\varphi_r$ and $\widetilde\varphi_r$ do not have more periodic points, any other point $\mathbf{x}$ is a non-periodic point of $\varphi_r$. 

If $\mathbf{x}\in C^r_{(1,1)}$ the $\varphi_r$-orbit of $\mathbf{x}$ remains in $C^r_{(1,1)}$. As $\bar\varphi_r$ is conjugate to $f_{r}$ by $\bar\pi_r(x_1,x_2)= 2^{\frac{1}{r-2}}(x_1,x_2)$  (cf.  Corollary~\ref{cor1}), and $f_r$ admits the first integral $I_r$ given by \eqref{int},  then $I_r\circ \bar\pi_r$ is a first integral of $\bar\varphi_r$. The  first integral $J$ in statement {\it 3.a)} is,  up to constants, $I_r\circ \bar\pi_r$. Hence,  the orbit of any point $\mathbf{x}$ belonging to $C^r_{(1,1)}$ lies in the curve 
$$L_\mathbf{x}=\{{\bf z}\in C^r_{(1,1)}: J({\bf z})=J(\mathbf{x})\}.$$
 The remaining assertions in statement {\it 3.a)} follow  from  Proposition~\ref{propbar}-{\it iii)} and  from the particular form of  the  map $\varphi_r$.  Indeed, as the third and fourth coordinates of $\varphi_r^{(n)}$ are respectively the first and second coordinates of $\varphi_r^{(n-1)}$, it follows that, if the first and second coordinates of $\varphi_r^{(n)}$ tend to zero (or to $+\infty$), the same holds for the third and fourth coordinates. 

If $\mathbf{x} \notin C_{(1,1)}^r$,  then by Proposition~\ref{orbits} its $\varphi_r$-orbit moves cyclically between the following four pairwise disjoint algebraic varieties of dimension 2
$$C^r_{\pi_r(\mathbf{x})}, \quad C^r_{\pi_r(\varphi_r(\mathbf{x}))}, \quad C^r_{\pi_r(\varphi_r^{(2)}(\mathbf{x}))}, \quad C^r_{\pi_r(\varphi_r^{(3)}(\mathbf{x}))}.$$

Let   $\widetilde\varphi_r^i$ denote the restriction of $\varphi_r^{(4)}$ to $C^r_{\pi_r(\varphi_r^{(i)}(\mathbf{x}))}$, for $i=0,\ldots ,3$. By Corollary~\ref{cor1}-{\it 2.ii)}, each map $\widetilde\varphi_r^i$ is conjugate to the map $f_{(r^2-2)^2-2}$ in \eqref{eq:tildefiproj}
by the conjugacy $\widetilde \pi_r^i$ given by
$$\widetilde \pi_r^i(x,y)= \lambda_i^{\frac{1}{r^2-4}} \left(\lambda_i \frac{x^{r^2-2}}{y}, y\right), $$
where $\lambda_i$ is defined by the expression in \eqref{lambda} with $(p,q)$  replaced by  $(p_i,q_i)=\pi_r (\varphi^{(i)}(\mathbf{x}))$.

As the map $f_{(r^2-2)^2-2}$ admits the first integral $I_{(r^2-2)^2-2}$ in \eqref{int}, then $I_{(r^2-2)^2-2} \circ \widetilde \pi_r^i$ is a first integral for $\widetilde\varphi_r^i$. Discarding multiplicative and additive constants in the computation of $I_{(r^2-2)^2-2} \circ \widetilde \pi_r^i$ and noting again that each parameter $\lambda_i$ is the restriction to $C^r_{\pi_r(\varphi_r^{(i)}(\mathbf{x}))}$ of the function $l$ in \eqref{Flambda} we obtain the expression $J$ in statement {\it 3.b)}. 

Finally note that for $i=0,\ldots,3$   the curve ${\cal L}_{\varphi_r^{(i)}(\bf{x})}$ lies on the 2-dimensional algebraic variety $C^r_{\pi_r(\varphi_r^{(i)}(\mathbf{x}))}$, and so the results in Proposition~\ref{proptilde}-{\it iii)} can be restated for the restriction of $\varphi_r^{(4)}$ to $C^r_{\pi_r(\varphi_r^{(i)}(\mathbf{x}))}$ as follows: either $\varphi_r^{(4n+i)}(\mathbf{x})$ converges to a 4-periodic point in $V$ or it converges to $(0,0,0,0)$ or its four components go to $+\infty$.  Using the expression of the map $\varphi_r$, we can then conclude that $\varphi_r (\varphi_r^{(4n+i)}(\mathbf{x}))$ has precisely the same behaviour, that is it converges to a 4-periodic point in $V$ if that was the case with $\varphi_r^{(4n+i)}(\mathbf{x})$, or converges to $(0,0,0,0)$ if that was the the case with $\varphi_r^{(4n+i)}(\mathbf{x})$, or all its components go to $+\infty$ if that was the case with $\varphi_r^{(4n+i)}(\mathbf{x})$. 
\end{proof}

\section{Conclusions and discussion}

The dynamics of cluster maps associated to mutation-periodic quivers is a recent subject of research.  In \cite{FoHo2} and \cite{FoHo1} we can find the study of  integrability and algebraic entropy of cluster maps arising from 1-periodic quivers. In \cite{InHeEs} we have addressed the study of the dynamics of cluster maps arising from the 2-periodic quivers associated to the Hirzebruch 0 and del Pezzo 3 surfaces. 

In the present work we completely described the dynamics of a parameter-dependent family of cluster maps in dimension 4 which arise from mutation-periodic quivers of period 2. This successful description relies on: (a) the possibility of reducing the original maps to symplectic maps of the plane (see \cite{InEs2}); (b) the global periodicity of the symplectic reduced maps, which allows us to study the dynamics of the original family by restricting  appropriate maps to  2-dimensional  symplectic varieties; (c) the fact that these restricted maps  belong to a group $\Gamma$ of birational (symplectic) maps of the plane isomorphic to $SL(2,\Zb)\ltimes \Rb^2$. Parametrizing $\Gamma$ by the trace of a $SL(2,\Zb)$ matrix and a real parameter (cf. Theorem~\ref{FormasNormais}) we obtained two classes of maps (normal forms) whose dynamics we described in detail. As a consequence we recovered the dynamics of the restricted maps (cf. propositions~\ref{propbar} and \ref{proptilde}) and then that of the original maps (cf. theorems~\ref{tr=1} and \ref{tr>2}).

The techniques we have used apply to any cluster map defined on an $N$-dimensional domain for which the reduced symplectic map is a globally periodic map in dimension $2k$. However,  the computations will become more involved as $N$ or the global period of the symplectic map increases. Also, when $N-2k$ is greater than 2, the respective restricted maps will belong to a group of symplectic maps with a more complicated structure than that of $\Gamma$. 

A  cluster map is  a birational map which, in general, has a  complicated expression due to the nature of the definition of a mutation. The existence of a (semi)conjugacy between a cluster map and a globally periodic map is not an obvious property of the cluster map, and there is no general procedure to prove its existence. In the case of the family treated here the presymplectic reduction enabled us to find an  explicit semiconjugacy to a globally periodic map. The study of the integrability either of the symplectic reduced map or of the cluster map may shed light on the existence of such a semiconjugacy.

The search for first integrals of a cluster map is usually linked to the existence of Poisson structures preserved by the cluster map. For instance, such Poisson structures exist  for a 3-parameter family of cluster maps associated to 2-periodic quivers of 4 nodes which include the family  $\varphi_r$  as a particular case  (see \cite[ \textsection 2, \textsection 5]{InEs2} for more details). The study of these Poisson structures and  first integrals arising from them, may provide  answers to the difficult question of knowing which mutation-periodic quivers give rise to cluster maps that are (semi)conjugate to globally periodic maps and consequently to the description of their dynamics.

\medskip

\noindent {\bf Acknowledgements}.   

The work of I. Cruz and H. Mena-Matos was partially funded by FCT under the project PEst-C/MAT/UI0144/2013.

The work of  M. E. Sousa-Dias  was partially funded by FCT/Portugal through the projects  UID/MAT/04459/2013 and EXCL/MAT-GEO/0222/2012.

\small{

\end{document}